\documentclass[12pt,oneside]{amsart} 
\usepackage{amssymb,amscd}
\usepackage{euscript}
 
\textwidth15.2cm
\hoffset=-1.1cm

      \theoremstyle{plain}
      \newtheorem{theorem}{Theorem}[section]
      \newtheorem{lemma}[theorem]{Lemma}
      \newtheorem{corollary}[theorem]{Corollary}
      \newtheorem{proposition}[theorem]{Proposition}
      \newtheorem{remark}[theorem]{Remark}
      
      \newtheorem{definition}[theorem]{Definition}        
          
\numberwithin{equation}{section}

      \makeatletter
      \def\@setcopyright{}
      \def\serieslogo@{}
      \makeatother

\def\A{\EuScript{A}} 
 
\def\E{\mathcal{E}}
\def\V{\mathcal{E}}

\def\M{{X}}

\def\R{\mathbb R}
\def\Rm{\mathbb R ^d}

\def\Z{\mathbb Z}
\def\N{\mathbb N}

\def\dist{\text{dist}}

\def\Id{\text{Id}}
\def\e{\epsilon}
\def\a{\alpha}
\def\ta{\tilde a}

\def\b{\beta}
\def\D{\Delta}

\def\la{\lambda}
\def\La{\Lambda}
\def\g{\gamma}

\def\ch{C^{1+\text{H\"older}}}

\def\r{\mathcal R}

\begin{document}

%\date{July 18, 2017}
\author{Boris Kalinin$^{\ast}$ and Victoria Sadovskaya$^{\ast \ast}$}

\address{Department of Mathematics, The Pennsylvania State University, University Park, PA 16802, USA.}
\email{kalinin@psu.edu, sadovskaya@psu.edu}

\title[Lyapunov exponents of cocycles]
{Lyapunov exponents of cocycles over non-uniformly hyperbolic systems} 

\thanks{{\it Mathematical subject classification:}\, 37H15, 37D25}
\thanks{{\it Keywords:}\, Cocycles, Lyapunov exponents, non-uniformly hyperbolic systems, hyperbolic measures, periodic orbits. }
\thanks{$^{\ast}$ Supported in part by Simons Foundation grant 426243}
\thanks{$^{\ast \ast}$ Supported in part by NSF grant DMS-1301693}

%%%%%%%%%%%%%%%%%%%%%%%%%%%%%%%

\begin{abstract} 

We consider linear cocycles over non-uniformly hyperbolic dynamical systems.
The base system is a diffeomorphism $f$ of a compact manifold $X$ preserving
a hyperbolic ergodic probability measure $\mu$. The cocycle $\A$ over $f$ is H\"older continuous
and takes values in $GL(d,\R)$ or, more generally, in the group of invertible bounded 
linear operators on a Banach space. For a $GL(d,\R)$-valued cocycle $\A$ we prove that the 
Lyapunov exponents of $\A$ with respect to $\mu$ can be approximated by the 
Lyapunov exponents of $\A$ with respect to measures on hyperbolic periodic orbits of $f$. 
In the infinite-dimensional setting one can define the upper and lower Lyapunov exponents 
of $\A$ with respect to $\mu$, but they cannot always be approximated by the exponents 
of $\A$ on periodic orbits. We prove that they can be approximated in terms of the 
norms of the return values of $\A$ on hyperbolic periodic orbits of $f$.

\end{abstract}

\maketitle 

\section{Introduction and statements of the results}

The theory of non-uniformly hyperbolic dynamical systems was pioneered by Ya. Pesin
in \cite{P1,P2} as a generalization of uniform hyperbolicity. It has become one of the central
areas in smooth dynamics with numerous applications, see \cite{BP,Po}. Periodic points play a major 
role in the study of both uniformly and non-uniformly hyperbolic systems. 
In the non-uniformly hyperbolic case, the existence of hyperbolic periodic orbits
and their relations to dynamical and ergodic properties of the system
%, such as topological and metric entropies, 
were established by A. Katok in a seminal paper \cite{Kt80}. 
In fact, any hyperbolic invariant measure can be approximated  in weak* topology by
invariant measures supported on hyperbolic periodic points of the system \cite{BP}.
A further advance in this direction was obtained by Z. Wang and W. Sun who
showed  that Lyapunov exponents of any hyperbolic measure can be approximated 
by Lyapunov exponents of periodic points \cite{WS}. This does not follow from weak*
approximation as Lyapunov exponents in general do not depend continuously on the 
measure in weak* topology. 

The Lyapunov exponents above correspond to the derivative cocycle $Df$ of the base 
system $(X,f)$, which is a particular case of a linear cocycle, that is an automorphism 
of a vector bundle over $X$ that projects to $f$. Linear cocycles are the prime examples 
of non-commutative cocycles over dynamical systems. 
For uniformly hyperbolic systems, where every invariant measure is hyperbolic, 
a periodic approximation of Lyapunov exponents of linear  cocycles was 
established by the first author in \cite{K11}.  The results and techniques in \cite{K11} proved 
useful in various areas such as cohomology of non-commutative cocycles and 
the study of random and Markovian matrices and operators.  More recently, 
approximation results were obtained by the authors for cocycles with values in the group 
of invertible bounded linear operators on a Banach space \cite{KS17} and 
by L. Backes for semi-invertible matrix cocycles \cite{B}.

In this paper we extend the periodic approximation results to linear cocycles over 
 non-uniformly hyperbolic systems. 
 In the base we consider a diffeomorphism $f$ 
 of a compact manifold $X$  preserving a hyperbolic ergodic probability measure $\mu$. 
 The cocycles over $(X,f)$ will take values in $GL(d,\R)$ or, more generally, the group 
 $GL(V)$ of invertible bounded linear operators on a Banach space $V$. 
The space $L(V)$  of bounded linear operators on $V$  is a Banach space equipped 
with the operator norm $\|A\|=\sup \,\{ \|Av\| : \,v\in V , \;\|v\| \le 1\}.$ 
The open set $GL(V)$  of invertible elements in  $L(V)$ is a topological group  
and a complete metric space with respect  to the metric
$$
d (A, B) = \| A  - B \|  + \| A^{-1}  - B^{-1} \|.
$$

\begin{definition} Let $f$ be a homeomorphism of a compact metric space $X$ 
 and let 
$A$ be a function from $X$ to $GL(V)$. 
The $GL(V)${\em -valued  cocycle over $f$ generated by }$A$ 
is the map $\A:\,\M \times \Z \,\to GL(V)$ defined  by $\A(x,0)=\Id$ and for $n\in \N$, 
 $$
\A(x,n)=\A_x^n = 
A(f^{n-1} x)\circ \cdots \circ A(x), \;\quad
\A(x,-n)=\A_x^{-n}= (\A_{f^{-n} x}^n)^{-1}.
$$
 In the finite-dimensional case of $V=\R^d$ we will call $\A$ a $GL(d,\R)$-valued cocycle.
\vskip.05cm
We say that the cocycle $\A$ is  {\em $\a$-H\"older}, $0<\a \le 1$, if there exists $M>0$ such that
 \begin{equation} \label{holder}
 d (\A_x, \A_y) \le  \, M \dist (x,y)^\alpha \quad\text{for all  }x,y \in X. 
\end{equation}
\end{definition}

Clearly, $\A$ satisfies the cocycle equation $\A^{n+k}_x= \A^n_{f^k x} \circ \A^k_x$.
Since $X$ is compact, the H\"older condition \eqref{holder} is equivalent to 
$\| \A _x - \A_y \| \le  \, M' \dist (x,y)^\alpha$ for all $x,y \in X$.
H\"older continuity of a cocycle is natural in our setting as the lowest regularity 
allowing development of a meaningful theory beyond the measurable case. It covers
the case of the derivative cocycles of $\ch$ diffeomorphisms and their restrictions
to H\"older continuous sub-bundles of $TX$, which play an important role in hyperbolic systems. 
\vskip.2cm

Any $GL(V)$-valued   cocycle $\A$ can be viewed as an automorphism of the trivial vector 
bundle $\E=X\times V$, $\A (x,v)= (fx, \A_x (v))$. More generally, we can consider a linear
cocycle $\A$, i.e. an automorphism of any vector bundle $\E$ over $X$ that projects to  $f$.
This setting covers the case of the derivative cocycle $Df$ of a diffeomorphism $f$ of $\M$ with 
nontrivial tangent bundle. For any measure $\mu$ on $\M$, any vector bundle $\E$ over $\M$
is trivial on a set of full measure \cite[Proposition 2.1.2]{BP} and hence any linear cocycle $\A$ can be 
viewed  as a  $GL(V)$-valued cocycle on a set of full measure.

First we consider the finite dimensional case where the Lyapunov exponents and Lyapunov 
decomposition for $\A$ with respect to an ergodic $f$-invariant measure $\mu$ are given 
by Oseledets Multiplicative Ergodic Theorem.
We note that both are defined $\mu$-a.e. and  depend on the choice of $\mu$.  
\vskip.2cm

\noindent{\bf Oseledets Multiplicative Ergodic Theorem.}\, \cite{O,BP} \,
{\it Let $f$ be an invertible ergodic measure-preserving transformation of a 
Lebesgue probability space $(X,\mu)$. Let $\A$ be a measurable $GL(d,\R)$-valued cocycle over $f$ 
satisfying 
$\,\log\|\A_x\|\in L^1(X,\mu)$ and $\log\|\A_x^{-1}\|\in L^1(X,\mu)$.
Then there exist numbers $\la_1 < \dots < \la_{m}$, an $f$-invariant 
set $\La$ with $\mu (\La)=1$, and an $\A$-invariant Lyapunov decomposition 
$$
\R^d=\E_x  =\E^{1}_x\oplus\dots\oplus \E^{m}_x\;\; \text{ for }\,x\in \La \text{ such that }
$$
 \begin{itemize}
\item[(i)] $ \underset{n\to{\pm \infty}}{\lim} n^{-1} \log\| \A^n_x\, v \|=  \la_i\, $  
for any $i=1,\dots ,m$ and any $\,0 \not= v\in \E^{i}_x$, and
 \vskip.1cm
\item[(ii)]  $ \underset{n\to{\pm \infty}}{\lim} n^{-1}  \log |\det \A^n_x |
= \sum_{i=1}^{m}d_i  \la_i $,\,\,
where $\,d_i=\dim \E^{i}_x$. 
\end{itemize}
}

\begin{definition} The numbers $\la_1, \dots, \la_{m}$ are called the {\it Lyapunov exponents}\,
of $\A$ with respect to $\mu$ and the integers $d_1, \dots , d_m$ are called their multiplicities.
\end{definition}

\begin{definition}  Let $\mu$ be an ergodic invariant  Borel probability measure for
a diffeomorphism $f$  of a compact manifold $X$. The measure is called {\em hyperbolic}
if all the Lyapunov exponents of the derivative cocycle $Df$ with respect to $\mu$ are non-zero.
\end{definition}
%\vskip.2cm

By Lyapunov exponents of $\A$ at a periodic point $p=f^kp$ we mean the Lyapunov exponents
of $\A$ with respect to the invariant measure $\mu_p$ on the orbit of $p$. They equal $(1/k)$ of 
the logarithms of the absolute values of the eigenvalues of $\A^k_p$. A periodic point $p$ is
called {\em hyperbolic} if $Df$ has no zero exponents at $p$, that is $D_pf^k$ has no 
eigenvalues of absolute value $1$. 

The following theorem extends the periodic approximation results in \cite{WS} and  \cite{K11}
to linear cocycles over  non-uniformly hyperbolic systems.

\begin{theorem} \label{main fin} 
Let $f$ be a $C^{1+\text{H\"older}}$ diffeomorphism  of a compact manifold $X$,
 let $\mu$ be a hyperbolic ergodic $f$-invariant Borel probability measure on $X$, 
 and let $\A$ be a $GL(d,\R)$-valued H\"older continuous cocycle over $f$.

Then the Lyapunov exponents
 $\,\la_1 \le \dots \le \la_d\,$ of $\A$ with respect to $\mu$,\, listed with multiplicities, 
 can be approximated by the Lyapunov exponents of $\A$ at periodic points. 
More precisely, for each $\e >0$ there exists a hyperbolic periodic point $p \in \M$ 
for which the Lyapunov exponents $\la_1^{(p)} \le \dots \le \la_d^{(p)}$ of $\A$ satisfy
\begin{equation} \label{main eq d}
|\la_i-\la_i^{(p)}|<\e\, \quad\text{for }\,i=1, \dots , d.
\end{equation}
\end{theorem}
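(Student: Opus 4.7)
The plan is to combine three classical ingredients: Pesin theory to reduce to a uniformly hyperbolic setting, Katok's horseshoe/shadowing construction to produce periodic orbits that track generic orbits of $\mu$, and the periodic approximation technique of the first author from the uniformly hyperbolic case \cite{K11}, adapted to the Pesin setup. First I would fix $\e>0$ and work on a Pesin regular set $\La_\ell$ of $\mu$-measure close to $1$, on which the hyperbolic splitting of $Df$, the local stable/unstable manifolds, and the associated Lyapunov charts are uniform, and on which the Oseledets splitting of $\A$ has uniform angles and uniform convergence rates of the exponents. Then I pick a point $x\in \La_\ell$ which is Oseledets regular for $\A$, Birkhoff generic, and returns infinitely often to a small neighborhood of itself inside $\La_\ell$.

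Next I would apply Katok's closing lemma in its quantitative form: for large return times $n$ with $f^n x$ close to $x$ in $\La_\ell$, there is a hyperbolic periodic point $p$ with $f^n p = p$ which exponentially shadows the segment $x,fx,\dots,f^{n-1}x$, i.e.
\[
\dist(f^k x, f^k p) \le C\,\theta^{\min(k,\, n-k)}\,\dist(x,f^n x)
\]
for some $\theta<1$ depending only on the Pesin block. By Hölder continuity of $\A$, this gives $\| A(f^k x)-A(f^k p)\|\le C'\theta^{\a\min(k,n-k)}$ for all $0\le k<n$.

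The third step is the matching of cocycle products, modelled on the argument in \cite{K11}. One writes $\A_p^n = \A_x^n \circ U_n$, where the correction $U_n$ is obtained by a telescoping decomposition and is estimated in operator norm by a sum of exponentially small Hölder increments, weighted by the hyperbolicity of $\A$ along the orbit. Since, on $\La_\ell$, the hyperbolicity rates of $\A$ are controlled by Pesin's regularity estimates, choosing $\ell$ large makes $\theta^\a$ beat these rates and forces $U_n$ to be close to $\Id$ in an appropriately adapted (Lyapunov) norm. Passing to exterior powers $\wedge^j \A$ allows one to extract each singular value of $\A_x^n$ as an operator norm of an associated cocycle, and the same perturbation argument shows that all $d$ singular values of $\A_p^n$ are $e^{o(n)}$-close to those of $\A_x^n$. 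Since $n^{-1}\log$ of the singular values of $\A_x^n$ converge to $\la_1\le\dots\le\la_d$ by Oseledets, and the Lyapunov exponents at $p$ are $n^{-1}\log$ of the absolute values of the eigenvalues of $\A_p^n$, which are close to its singular values by standard Weyl/Horn-type estimates since $p$ lies in a uniformly hyperbolic horseshoe provided by Katok, this yields \eqref{main eq d}.

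The main obstacle is the third step: the telescoping perturbation bound is naturally an operator-norm statement controlling only the top singular value, so turning it into a simultaneous control of all $d$ singular values requires carrying the exterior-power analysis of \cite{K11} through the non-uniform Pesin estimates. In particular, one must match the Pesin parameters (which govern angles between Oseledets subspaces, the Hölder-adapted norm, and the decay of the hyperbolic rates on $\La_\ell$) against the shadowing rate $\theta$ and the Hölder exponent $\a$, so that the accumulated perturbation is negligible relative to every pair of consecutive Lyapunov exponents $\la_i<\la_{i+1}$ of $\A$ with respect to $\mu$.
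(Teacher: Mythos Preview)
Your setup — Pesin regular sets, Katok's closing lemma with exponential shadowing, and the reduction to the top exponent of each exterior power $\wedge^j\A$ — matches the paper exactly. The gap is in your third step, at the very end: you control the \emph{singular values} of $\A_p^n$ and then assert that the Lyapunov exponents at $p$, which are $n^{-1}\log$ of the moduli of the \emph{eigenvalues} of $\A_p^n$, are close to these by ``Weyl/Horn-type estimates since $p$ lies in a uniformly hyperbolic horseshoe.'' This does not follow. Weyl's inequalities only give $|\text{eigenvalue}_j|\le \text{singular value}_j$ in the majorization sense, so you get the upper bound $\la_j^{(p)}\le \la_j+\e$, but no lower bound. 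Horn's theorem is a characterization, not a closeness estimate. And the hyperbolicity of the horseshoe concerns $Df$, not the unrelated cocycle $\A$; it says nothing about the spectrum of $\A_p^n$. For a single matrix the gap between $n^{-1}\log$ of singular values and $n^{-1}\log$ of eigenvalue moduli can be of order one (think of an upper-triangular matrix with equal diagonal entries and a huge off-diagonal term), so there is no automatic passage.

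The paper handles the lower bound differently, and this is really the heart of the argument. Using the Lyapunov norm for $\A$, it sets up cones $K_i$ around the top Oseledets subspace $\E^m_{x_i}$ and shows, from the shadowing estimate and H\"older continuity, that $\A_{p_i}(K_i)\subset K_{i+1}^\theta$ and that every vector in $K_i$ is stretched by at least $e^{\la-2\e}$ in the top component. Since the cones at $x$ and $f^kx$ are close (continuity of the Oseledets data on the Pesin block), one gets $\A_p^k(K_0)\subset K_0$ with a definite expansion rate, and this directly bounds the \emph{spectral radius} of $\A_p^k$ from below by $e^{k(\la-3\e)}$. That is the missing idea: to get $\la^{(p)}\ge \la-3\e$ you need an invariant-cone argument producing expansion of iterates of $\A_p^k$, not a one-shot singular value comparison. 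Once you have this for the top exponent, the exterior-power reduction you describe goes through verbatim, since the top Lyapunov exponent of $\wedge^j\A$ at $p$ \emph{is} $\la_d^{(p)}+\cdots+\la_{d-j+1}^{(p)}$.
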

\vskip.3cm

The largest and smallest
Lyapunov exponents $\lambda_+(\A,\mu)=\lambda_m$ and $\lambda_-(\A,\mu)=\lambda_1$ 
can be expressed as follows:
\begin{equation} \label{exponents}
\begin{aligned}
&\lambda_+(\A,\mu)=  \lim_{n \to \infty} n^{-1}  \log \| \A_x ^n \| 
\quad \text{for } \mu \text {-a.e. } x\in \M  , \\
&\lambda_-(\A,\mu)=  \lim_{n \to \infty}  n^{-1}  \log \| (\A_x ^n)^{-1} \|^{-1} 
\quad \text{for } \mu \text {-a.e. }x\in \M .
\end{aligned}
\end{equation}

While there is no Multiplicative Ergodic Theorem  in the infinite-dimensional case in general, 
the upper and lower Lyapunov exponents $\la_+$ and $\la_-$ of $\A$ can still be 
defined by \eqref{exponents}, see Section \ref{subadditive}. 
For the invariant measure $\mu_p$ on the orbit of  $p=f^kp$ we have
 $$
 \lambda_+(\A,\mu_p) = k^{-1} \log \,(\text{spectral radius of }\A_p^k)
 \,\le\, k^{-1} \log \| \A_p ^k \| .
 $$
In the infinite-dimensional setting, 
it is not always possible to approximate $\,\lambda_+(\A,\mu)$ by $\lambda_+(\A,\mu_p)$,
even for cocycles over uniformly hyperbolic systems \cite[Proposition 1.5]{KS17}. 
However, an approximation of $\,\lambda_+(\A,\mu)$ by $k^{-1}  \log \| \A_p ^k \|$ was
obtained in \cite{KS17} for cocycles over uniformly hyperbolic systems.
The next theorem establishes such an approximation in the non-uniformly 
hyperbolic setting.

\begin{theorem} \label{main inf} 
Let $f$ be a $\ch$ diffeomorphism  of compact manifold $X$,
 let $\mu$ be a hyperbolic ergodic $f$-invariant Borel probability measure on $X$, 
 and let $\A$ be a H\"older continuous $GL(V)$-valued cocycle over $f$.

Then  for each  $\e>0$ there exists a hyperbolic periodic point $p=f^kp$ in $X$
such that 
\begin{equation} \label{main eq}
\left| \,\lambda_+(\A,\mu)-  k^{-1} \log \| \A_p ^k \| \, \right|<\e \;\text{ and }\;
\left| \, \la_- (\A,\mu)- k^{-1} \log \| (\A_p ^k)^{-1} \| ^{-1} \, \right|<\e.
\end{equation}
Moreover, for any $N\in \N$ there exists such $p=f^kp$ with $k>N$.

\end{theorem}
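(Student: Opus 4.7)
The strategy is to combine Katok's periodic approximation in Pesin theory with the H\"older telescoping argument of \cite{KS17} for uniformly hyperbolic systems. Since an Oseledets theorem is not available in infinite dimensions, existence of the limits in \eqref{exponents} will come from Kingman's subadditive ergodic theorem (treated in Section \ref{subadditive}). Thus for $\mu$-a.e. $x$ and any $\eta>0$, arbitrarily large $n$ satisfy
$$
|n^{-1} \log \|\A_x^n\| - \lambda_+(\A,\mu)|<\eta \quad\text{and}\quad
|n^{-1} \log \|(\A_x^n)^{-1}\|^{-1} - \lambda_-(\A,\mu)|<\eta.
$$

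First I would fix a Pesin regular set $\Lambda_\ell \subset X$ of positive $\mu$-measure on which the hyperbolic rates are uniform and the local stable/unstable Pesin manifolds have uniformly bounded size, and choose a density point $x\in \Lambda_\ell$ for which the above limits hold. Poincar\'e recurrence then gives a sequence $n_j\to \infty$ with $f^{n_j}x\in \Lambda_\ell$, $\dist(f^{n_j}x, x)\to 0$, and both $\eta$-approximations of $\lambda_\pm$ still valid at time $n_j$. Katok's closing lemma for $\ch$ diffeomorphisms then produces a hyperbolic periodic point $p=f^{k_j}p$ with $|k_j-n_j|\le 1$ whose orbit exponentially shadows the segment $\{x,fx,\dots,f^{n_j}x\}$:
$$
\dist(f^i p, f^i x)\le C e^{-\delta \min(i, n_j - i)}, \qquad 0\le i\le n_j,
$$
with constants $C,\delta>0$ depending only on $\Lambda_\ell$.

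To finish I would compare $\A_p^{k_j}$ with $\A_x^{n_j}$. H\"older continuity of $\A$ gives $\|\A_{f^ip}-\A_{f^ix}\|\le M\,\dist(f^ip,f^ix)^\alpha \le M' e^{-\alpha\delta \min(i,n_j-i)}$, a summable error along the orbit. The telescoping argument of \cite{KS17}, adapted to the Pesin constants attached to $\Lambda_\ell$ and to the uniform bounds on $\|\A_y^{\pm 1}\|$ coming from compactness of $X$, yields
$$
e^{-C''\eta k_j}\le \frac{\|\A_p^{k_j}\|}{\|\A_x^{n_j}\|}\le e^{C''\eta k_j}, \qquad
e^{-C''\eta k_j}\le \frac{\|(\A_p^{k_j})^{-1}\|}{\|(\A_x^{n_j})^{-1}\|}\le e^{C''\eta k_j}
$$
for $n_j$ sufficiently large. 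Taking logs, dividing by $k_j$ and combining with the subadditive estimates proves \eqref{main eq}; the moreover clause follows by requiring $n_j>N+1$.

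The main obstacle is the non-uniformity of the Pesin constants and the required order of quantifiers: one must fix $\eta$ first, then $\ell$ large enough that the subadditive limits hold at a density point of $\Lambda_\ell$, and only then choose $n_j$ large enough for Katok's closing lemma to apply and for the cumulative H\"older error in the telescope to stay below $\eta k_j$. The $\ch$ hypothesis on $f$ enters both in producing the Pesin machinery and in guaranteeing the exponential shadowing rate; once these ingredients are in place, the Banach-space telescoping works essentially as in \cite{KS17}, and no eigenvalue arguments are needed --- which is essential since in infinite dimensions $\|\A_p^k\|$ can be much larger than the spectral radius of $\A_p^k$.
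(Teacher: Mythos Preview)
Your outline has the right architecture (Pesin sets, closing lemma, H\"older telescoping), but there are two genuine gaps in the lower bound for $\|\A_p^k\|$, and they are exactly the places where the paper's proof does real work.

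\textbf{Kingman is not enough.} In the telescope
\[
\A_x^n-\A_p^n=\sum_{i=0}^{n-1}\A^{\,n-i-1}_{f^{i+1}x}\circ(\A_{f^ix}-\A_{f^ip})\circ\A_p^{\,i}
\]
the right factor is controlled by the Lyapunov norm, $\|\A_p^{\,i}\|\lesssim e^{i(\lambda_++\e)}$, and the middle factor by shadowing. For the left factor you need $\|\A^{\,n-i-1}_{f^{i+1}x}\|\le \|\A_x^n\|\,e^{-(i+1)(\lambda_+-\e)}$; otherwise the sum is only bounded by $C\delta^\alpha e^{2\e n}\|\A_x^n\|$, which blows up. Kingman gives $n^{-1}a_n(x)\to\lambda_+$ but \emph{not} the inequality $a_n(x)-a_{n-i}(f^ix)\ge(\lambda_+-\e)i$ for all $L\le i\le n$. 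That is precisely the Karlsson--Margulis refinement (Proposition~\ref{GK}/Corollary~\ref{GKC}), which singles out a sparse set of ``good'' times $n$ at which this holds. The paper's argument, and the one in \cite{KS17} you cite, both rely on it; your write-up does not invoke it.

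\textbf{The period cannot equal the good time.} You take $|k_j-n_j|\le 1$. But the shadowing estimate is $\dist(f^ix,f^ip)\le\delta e^{-\gamma\min\{i,\,k-i\}}$, so for $i$ near $k$ the distance is only of order $\delta$. If $n=k$, the terms of the telescope with $i$ close to $n$ contribute $\|\A_x^n\|\cdot e^{2\e i-\alpha\gamma(n-i)}$, and summing over $i>n/2$ produces a factor $e^{2\e n}$ that again destroys the estimate. The paper avoids this by choosing the period $k$ in a window $n(1+\sigma)\le k\le n(1+2\sigma)$ with $\sigma=4\e/(\alpha\gamma)$ (Lemma~\ref{G}), so that $\alpha\gamma\min\{i,k-i\}\ge 4\e i$ for all $i\le n$. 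One then compares $\A_p^{\,n}$ with $\A_x^{\,n}$ (not $\A_p^{\,k}$ with $\A_x^{\,k}$) and passes to $\|\A_p^{\,k}\|$ via the crude bound $\|\A_p^{\,n}\|\le\|(\A^{k-n}_{f^np})^{-1}\|\cdot\|\A_p^{\,k}\|$, absorbing the loss because $k-n\le 2\sigma k$. This refined recurrence (finding a return in a prescribed window after a Karlsson--Margulis time) is not provided by Poincar\'e recurrence alone and is an essential step you are missing.
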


The proof of the finite dimensional approximation in Theorem \ref{main fin} relies on Multiplicative Ergodic Theorem, which yields that the cocycle has
finitely many Lyapunov exponents and, in particular, the largest one is isolated. 
As this may not be the case in infinite dimensional setting even for a single operator,
in Theorem \ref{main inf} we use a different approach which  relies on
norm estimates. In particular we use   a suitable version of Lyapunov norm and 
results on subadditive cocycles \cite{KM}.

%Our methods may also be useful in the study of cocycles with values in 
%diffeomorphism groups as well as of infinite-dimensional nun-uniformly 
%hyperbolic dynamical systems on Hilbert or Banach manifolds. 

\begin{remark} \label{several}
Theorems \ref{main fin} and \ref{main inf} can be strengthened to conclude the 
existence of a hyperbolic periodic point $p=f^kp$ which gives simultaneous 
approximation as in \eqref{main eq d} and \eqref{main eq} for finitely many  
cocycles $\A^{(j)}$, $i=1, \dots , m$, over $f$ with values in $GL(V_i)$. We describe the 
 modifications for this case in the proofs.
\end{remark}

\begin{remark}
Theorems  \ref{main fin} and \ref{main inf} hold if we 
replace $X\times V$ by a H\"older continuous vector bundle $\V$ over 
$X$ with fiber $V$ and the cocycle $\A$ by an automorphism $\A: \V \to \V$ 
covering $f$. 
This setting is described in detail in Section 2.2 of \,\cite{KS13} and the proofs 
work without any significant modifications.  
\end{remark}

%%%%%%%%%%%%%%%%%%%%%%%%%%%%%%%%%%%%%
%%%%%%%% Subadditive sequences of functions

\section{Preliminaries}
For a $GL(V)$-valued cocycle $\A$ over $(X,f)$ we consider the trivial bundle
$\V=X\times V$ and view $\A^n_x$ as a fiber map from $\V_x$ to $\V_{f^nx}$. 
This makes notations and 
arguments more intuitive and the  extension to non-trivial bundles more transparent.
In fact, all our arguments are written for the bundle setting, except we 
sometimes identify fibers $\V_{x}$ and $\V_{y}$ at nearby points $x$ and $y$. 
This is automatic for a trivial bundle, and a detailed description of a suitable 
identification for a non-trivial bundle is given in \cite[Section 2.2]{KS13}.

\subsection{Lyapunov metric}\label{SLyapunovmetric} 

We consider a $GL(d,\R)$-valued cocycle $\A$ over $(X,f,\mu)$ as in the Oseledets 
Multiplicative Ergodic Theorem
and denote the standard scalar product in $\Rm$ by $\langle \cdot,\cdot \rangle$. 
We fix  $\e >0$ and for any  point $x\in \La$  define the {\it Lyapunov 
scalar product}  $\langle \cdot,\cdot \rangle_{x,\e}$ on $\Rm$ as follows.
\vskip.2cm

For $u\in \E_x^i,\,\,v\in \E_x^j,\,\,i\neq j$,  we set  $\;\langle u,v\rangle_{x,\e} =0$.

\vskip.2cm
For $u,v\in \E_x^i$, $\,i=1,\dots,m$,  we define
$
\;\langle u,v \rangle_{x,\e}  =d \,\underset{n\in\Z}{\sum }\,\langle\, \A_x^n\, u,\,\A_x^n\,v \,\rangle\, e^{-2\la_i n -\e |n|}. 
$ 
\vskip.2cm
\noindent The series converges exponentially for any $x \in \La$. The constant $d$ in the formula 
allows a more convenient comparison with the standard scalar product. 
The norm generated by this scalar product is called the {\em Lyapunov norm}
and is denoted by $\|.\|_{x,\e}$. When $\e$ is fixed we will denote the scalar 
product by $\langle \cdot,\cdot \rangle_x$ and the  norm by $\|.\|_x$.

\vskip.1cm

We summarize the main properties of the Lyapunov scalar product 
and norm, see  \cite[Sections 3.5.1-3.5.3]{BP}  for more details. A direct calculation 
shows \cite[Theorem 3.5.5]{BP} that for any $x\in \La$ and any $u\in \E_x^i$,
\begin{equation}  \label{estAEi}
e^{n \la_i -\e |n|} \cdot \|u\|_{x,\e} \le 
\| \A_x^n \, u\|_{f^n x,\e} \le
e^{n \la_i+\e |n|}\cdot \|u\|_{x,\e} \quad \text{for all } n \in \Z,
\end{equation}
\begin{equation}  \label{estAnorm}
e^{n \la_+ -\e n} \le \| \A_x^n \|_{f^n x \leftarrow x} \le e^{n \la_+ +\e n}
\quad \text{for all }  n \in \N,
\end{equation}
where $\la_+ = \la _m$ is the largest Lyapunov exponent and 
$\| . \|_{f^n x \leftarrow x}$ is the operator norm with respect to the
Lyapunov norms defined as
$$
\| A \| _{y \leftarrow x}=\sup \,\{ \| Au \|_{y,\e} \cdot \| u\|_{x,\e} ^{-1} : \; 0 \not= u \in \Rm \}.
$$

The Lyapunov scalar product and norm are defined only on $\La$ and, in general, depend only 
measurably on the point even if the cocycle is H\"older, so comparison with the standard
norm is important. The lower bound follows easily from the definition: $\|u\|_{x,\e}\ge \|u\|$. 
An upper bound is not uniform, but can be chosen to change slowly along the orbits 
\cite[Proposition 3.5.8]{BP}: there exists a measurable function $K_\e (x)$  on  $\La$ 
such that 
\begin{equation}  \label{estLnorm} 
\| u \| \le  \| u \|_{x,\e} \le K_\e(x) \|u\| \qquad \text{for all } x \in \La
\text{ and }  u \in \Rm, \quad \text{and} 
\end{equation}
\begin{equation}  \label{estK}  
 K_\e(x) e^{-\e |n|}  \le K_\e(f^n x) \le  K_\e(x) e^{\e |n|}  \qquad 
\text{for all }  x \in \La \text{ and }  n \in \Z.
\end{equation}

For any matrix $A$ and any points $x,y\in \La$ inequalities 
\eqref{estLnorm} and \eqref{estK} yield 
\begin{equation}  \label{estMnorm}
K_\e (x) ^{-1} \| A \| \le \| A \|_{y \leftarrow x} \le K_\e (y) \| A \| \, .
\end{equation}

For any $\ell>1$ we define the sets 
\begin{equation}  \label{Pset}
 \La_{\e,\ell}= \{x \in \La : \; \; K_\e(x) \le \ell \}.
\end{equation}
and note that $\mu (\La_{\e,\ell}) \to 1$ as $\ell \to \infty$.
Without loss of generality we can assume that the set $\La_{\e,\ell} $ is compact
and that Lyapunov splitting and Lyapunov scalar product are continuous on
$\La_{\e,\ell} $. Indeed, by Luzin theorem we can always find a subset of $\La_{\e,\ell} $ 
satisfying these properties with arbitrarily small loss of measure.

%%%%%%%%%%%%%%  Pesin Sets %%%%%%

\subsection{Pesin sets and Closing lemma} \label{Closing lemma}

Let $f$ be a diffeomorphism of a compact manifold  $X$ and $\mu$ be an
ergodic $f$-invariant  Borel probability measure. We apply the Multiplicative Ergodic Theorem
and construct the Lyapunov metric as  above for the derivative cocycle $\A_x=D_x f$.
For this cocycle, we will denote the corresponding set $\La_{\e,\ell}$ defined 
in the previous section by $\r_{\e,\ell}$, which is often called a Pesin set.

Suppose now that  the measure $\mu$ is hyperbolic, i.e. all Lyapunov exponents of the 
derivative cocycle $Df$ with respect to $\mu$ are non-zero. 
We assume that there are both positive and negative 
such exponents. Otherwise $\mu$ is an atomic measure on a single periodic orbit 
\cite[Lemma 15.4.2]{BP}, in which case our results are trivial. 
We denote by $\chi >0$ the smallest absolute
value for these exponents. We will fix $\e>0$ sufficiently small compared to $\chi$
and $\ell \in \N$  large enough so that the corresponding Pesin set  $\r _{\e,\ell}$ has 
positive measure. We will apply the following closing lemma.
It does not use the splitting into individual Lyapunov sub-bundles $\E^i$, only the stable/unstable sub-ones, which are the sums of all  Lyapunov sub-bundles corresponding to negative/positive Lyapunov exponents, respectively. Consequently, a cruder version of a Pesin set can be used instead of $\r _{\e,\ell}$.
 
\begin{lemma} [Closing Lemma]  \cite{Kt80}, \cite[Lemma 15.1.2]{BP} \label{closing} 
Let $f : \M \to \M$ be a $\ch$ diffeomorphism preserving a hyperbolic Borel probability 
measure $\mu$ and let be $\chi >0$ the smallest absolute value of its Lyapunov  exponents.
 Then for any sufficiently large $\ell \in \N$, any sufficiently small $\e>0$, and any $\delta > 0$
 there exist $\g =\g (\e,\ell) \in (\e, \chi -2\e)$ and  $\b = \b(\delta,\e,\ell) > 0$  such that if 
 $$
 x \in \r _{\e,\ell} ,\;\;f^kx \in \r _{\e,\ell} \;\text{  and }\,\dist(x,f^kx) < \b \;\text{ for some $k \in \N$}, 
 $$ 
 then there exists 
  a hyperbolic periodic point $p=f^kp$ such that
\begin{equation}  \label{d-close-traj}
\dist (f^i x, f^i p) \le  \delta \, e^{ -\g\, \min \{\,i,\,k-i \,\} }
\quad\text{for every }i=0, \dots , k.
\end{equation}
\end{lemma}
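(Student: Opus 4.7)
The strategy is to work in Lyapunov charts based at the points of the Pesin set $\rel$ and produce the periodic point by a fixed-point argument for $f^k$ viewed as a perturbation of a hyperbolic linear map near $x$. The hypothesis that both $x$ and $f^kx$ lie in $\rel$ gives uniform control of the stable/unstable splittings and of their local size at these two points via the Lyapunov norm bound $K_\e \le \ell$, while closeness $\dist(x,f^kx)<\b$ makes $f^k$ a small translation-plus-nonlinearity perturbation of the composition of hyperbolic linear maps along the orbit.

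First I would construct Lyapunov charts $\Phi_{f^ix}\colon B_{r_i}(0)\subset T_{f^ix}X\to X$ for $i=0,\dots,k$, where the radii $r_i$ are controlled by $\ell^{-1}e^{-\e|i|}$ (via \eqref{estLnorm}--\eqref{estK} applied to the derivative cocycle). In these charts the connecting maps $\tilde f_i=\Phi_{f^{i+1}x}^{-1}\circ f\circ\Phi_{f^ix}$ split along stable and unstable sub-bundles with linear part contracting/expanding at rates $e^{\pm(\chi-\e)}$, plus a $C^1$-small nonlinear error controlled by the H\"older exponent of $Df$ and the chart radius. This is precisely the place where one loses the exponent from $\chi$ to some $\g\in(\e,\chi-2\e)$: the charts themselves may distort at rate $e^{\e|n|}$, so the genuine uniform hyperbolicity rate one can extract in these coordinates is $\chi-2\e$, and an arbitrary $\g$ strictly between $\e$ and $\chi-2\e$ can then be selected.

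Next I would consider $F=\Phi_x^{-1}\circ f^k\circ\Phi_x$, defined on a neighborhood of the origin whose size depends only on $\ell$. By hypothesis $\Phi_x^{-1}(f^kx)$ is a vector of norm comparable to $\b$, and the differential $DF(0)$ is a composition of the hyperbolic linear maps $D\tilde f_i(0)$, which is hyperbolic with rate $\chi-\e$ in the Lyapunov norm. A standard graph-transform/Banach fixed-point argument (or equivalently the implicit function theorem applied to $F-\mathrm{Id}$) then yields, provided $\b=\b(\delta,\e,\ell)$ is chosen small enough, a unique fixed point $z_0$ of $F$ in a ball of radius $C(\ell)\b$. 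Setting $p=\Phi_x(z_0)$ gives the required periodic point $p=f^kp$, and its hyperbolicity follows because $D_pf^k$ is a small perturbation of $DF(0)$, which is uniformly hyperbolic.

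The last step is verifying the exponential shadowing bound \eqref{d-close-traj}. Writing $z_i=\Phi_{f^ix}^{-1}(f^ip)$, the vectors $z_i$ satisfy a telescoping relation governed by $\tilde f_i$: the stable component of $z_i$ is contracted at rate $\g$ in forward time starting from $i=0$, while the unstable component is contracted at rate $\g$ in backward time starting from $i=k$, so $\|z_i\|\le C(\ell)\,\b\,e^{-\g\min(i,k-i)}$; converting back to $X$ via \eqref{estLnorm} and absorbing the constant $C(\ell)\ell$ by further shrinking $\b$ yields the stated bound with the prescribed $\delta$. The chief obstacle throughout is bookkeeping the two competing exponents — the base hyperbolicity $\chi$ and the chart distortion $\e$ — so that $\g$ can be placed strictly inside $(\e,\chi-2\e)$ while all nonlinear error terms remain absorbable into the linear contraction along the orbit.
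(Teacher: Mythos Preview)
The paper does not supply its own proof of this lemma: it is quoted from Katok \cite{Kt80} and \cite[Lemma 15.1.2]{BP}, with only the remark that the usual multiplicative constant on the right of \eqref{d-close-traj} has been absorbed into the choice of $\beta$. So there is nothing in the paper to compare your argument against directly.

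That said, your outline is the standard route to Katok's closing lemma and is essentially correct: pass to Lyapunov charts along the orbit segment, use $x,f^kx\in\r_{\e,\ell}$ to get uniform control of the hyperbolic splitting and chart sizes at the endpoints, obtain a uniformly hyperbolic sequence of local maps with rate $\chi-2\e$ after accounting for the $e^{\e|n|}$ chart distortion, and then produce the periodic point by a contraction/graph-transform argument, with the exponential shadowing estimate read off from the stable and unstable contractions. One point to be careful about: the fixed-point argument is usually carried out for the full sequence $(\tilde f_0,\dots,\tilde f_{k-1})$ together with a chart identification between $T_{f^kx}X$ and $T_xX$ (rather than literally for $F=\Phi_x^{-1}\circ f^k\circ\Phi_x$, since $f^kx$ need not lie in the domain of $\Phi_x$ before you know $\beta$ is small), and the shadowing bound comes out of that sequence argument simultaneously with the existence of the fixed point. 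With that adjustment your sketch matches the proofs in the cited references.
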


While the lemma is usually stated with a constant  on the right hand side of \eqref{d-close-traj},
this constant can be absorbed using the choice of $\beta$. We will not use hyperbolicity of the
 periodic point $p$ in the proof. 
 In fact, for sufficiently large $k$ the hyperbolicity of $p$ can be recovered by applying our argument 
 to the derivative cocycle $Df$.
%Also, in our setting the constant $\gamma$ does not depend on $\ell $.

%%%%%%%%%%%%%%%%%%%%%%%%%%%%%%%%%%%%%%%%%
%%%%%%%   Finite Dimensional %%%%%%%%%%%%%%%%%%%%%%
%%%%%%%%%%%%%%%%%%%%%%%%%%%%%%%%%%%%%%%%

\section{Proof of Theorem \ref{main fin}}

Let $\la_1 < \dots < \la_m$ be the Lyapunov exponents of $\A$  with respect to  $\mu$,
listed without multiplicities. 
We will denote the largest exponent $\la_m$ by $\la$ and second largest $\la_{m-1}$ by $\la'$.
Similarly, for any periodic point $p$ we denote by $\la^{(p)}$ the largest Lyapunov 
exponent of $\A$ at $p$. 

We fix $\e'>0$ sufficiently small compared to $\chi$ and $\ell' \in \N$  large enough 
so that the corresponding Pesin set  $\r _{\e',\ell'}$ for the derivative cocycle $Df$
of  the base system has positive measure. We apply the Closing Lemma \ref{closing} and get 
$\g =\g (\e',\ell') >0$.

If $\la$ is not the only Lyapunov exponent 
of $\A$ with respect to $\mu$, we define
\begin{equation} \label{e0}
\e _0 = \min \,\{\,  \a \g , \;(\la - \la')/4,\; \e'\, \},
\end{equation} 
and otherwise we set $\e _0 =  \min \,\{\,  \a \g ,\, \e' \,\}$.

We fix $0<\e<\e_0$ and consider the sets $\La_{\ell, \e}$ for the cocycle $\A$. 
We denote 
$$
P=\La _{\ell, \e} \cap \r_{\ell', \e'}.
$$
and fix $\ell$  sufficiently  large so that $\mu (P)>0$.

We take a point $x \in P$ which is in the support of $\mu$ restricted to $P$.
Then for any $\beta>0$ we have $\mu(P \cap B_{\beta/2}(x))>0$, where
$B_{\beta/2}(x)$ is the open ball of radius $\beta/2$ centered at $x$.
By Poincare recurrence there are infinitely many $k\in \N$ such that $f^k x \in P \cap B_{\beta/2}(x)$.
For any such $k$ we have: $x, f^kx\in P$ and $\dist (x, f^k x) <\beta$.
Taking $\b=\b(\delta,\e',\ell') > 0$ from the Closing Lemma \ref{closing} we obtain 
a hyperbolic periodic point satisfying \eqref{d-close-traj}. 
We can assume that $\b \le \delta$, so that when $\delta$ is small so is $\beta$. 

We conclude that for each $\delta >0$ there exist arbitrarily large $k$ such that
$x, f^kx\in P$ and there is a hyperbolic periodic point $p=f^kp$ satisfying \eqref{d-close-traj}.
Now we show that for such a point $p$ with a sufficiently large $k$ and a sufficiently small
$\delta$  we have
\begin{equation} \label{largestexp}
|\la-\la^{(p)}| \le 3\e. 
\end{equation} 
To estimate $\la^{(p)}$ from above we use the fact \cite[Lemma 3.1]{K11} that for such a point $p$
\begin{equation}  \label{d-close-normO}
\| \A_p^k \| \le \ell \, e^{c \,  \ell \delta^\a} e^{k (\la+\e) },
\end{equation}
where the constant $c$ depends only on $\A$ and on the number $(\a \g - \e)$, which
also follows from Lemma \ref{mainest} below.
Since we chose $\e<\a\g$ we obtain
$$
\la^{(p)} \le k^{-1} \log \| \A_p^k \|  \le \la + \e + k^{-1} \log ( \ell \, e^{c \,  \ell \delta^\a}) \le \la + 2\e
$$ 
provided that $\delta <1$ and  $k$ is large enough compared to $\ell$.
\vskip.1cm 

Now we  estimate $\la^{(p)}$ from below.
We denote $x_i=f^i x$ and $p_i=f^i p$,
and  we write $\| . \| _i$ for the Lyapunov norm at $x_i$.
Since the Lyapunov norm may not exist at points $p_i$ we will use the Lyapunov 
norms at the  corresponding points  $x_i$ for the estimates. 
For each $i$ we have the orthogonal splitting $\Rm =  \E_i ^{(1)}\oplus\E_i^{(2)}$, 
where $\E_i ^{(1)}=\E^m_{x_i}$ is the Lyapunov space at $x_i$ corresponding to the largest Lyapunov 
exponent $\la=\la_m$, and $\E_i^{(2)}$ is the direct sum of  the other Lyapunov spaces at $x_i$. 
We will assume that $\la$ is {\em not} the only Lyapunov exponent 
of $\A$, as otherwise $\E_i^{(2)}=\{ 0 \}$  and the argument is simpler.
 For a vector $u \in \Rm$ we write $u = u_1 + u_2$, where $u_1\in \E_i^{(1)}$ 
 and $u_2 \in \E_i^{(2)}$.
 \vskip.1cm
 
We take $\theta=e^{\la' -\la+4\e}<1$ by the choice of $\e$. For  $i=0, \dots , k $ we consider the cones
$$
K_i = \{ u \in \Rm : \,\| u_2 \| _i \le  \| u_1 \| _i\} \quad \text{and} \quad
K_i^\theta = \{ u \in \Rm : \,\| u_2 \| _i \le \theta\, \| u_1 \| _i \,\}.
$$
Now we show that there exist $\delta_0 > 0$ such that for all $0<\delta<\delta_0$
and all  $i=0, \dots , k-1$
\begin{equation} \label{Ki}
\A_{p_i} (K_i ) \subset K_{i+1}^\theta \quad  \text{and} \quad
\| \left( \A_{p_i} u \right) _1 \|_{i+1} \ge e^{\la -2\e} \| u_1 \| _i \; \text{ for each $u \in K_i$}.
\end{equation}
We fix $0 \le i < k$ and a vector $u \in K_i$. Denoting  $\A_{x_i} u =v = v_1 + v_2$ we get 
 by \eqref{estAEi} 
\begin{equation} \label{v}
 e^{\la - \e} \| u_1 \| _i \le \| v_1 \| _{i+1}   \le e^{\la + \e} \| u \| _i \quad \text{and} \quad
  \| v_2 \| _{i+1}  \le e^{\la' + \e} \| u_2 \| _i \, .
\end{equation}   
We  write
$\A_{p_i} =(\Id + \D_i ) \, \A_{x_i},$ where 
$\D_i = \A_{p_i} (\A_{x_i})^{-1} - \Id =  (\A_{p_i} - \A_{x_i}) (\A_{x_i})^{-1} .$
\vskip.1cm

Since both $x_0$ and $x_k$ are in $\La_\ell$ and $\dist (x_i, p_i) \le \delta e^{-\g \min \{i,\, k-i\} }$,
using \eqref{estMnorm}
 we can estimate 
\begin{equation} \label{DE}
\begin{aligned}
& \| \D_i \|_{x_{i+1} \leftarrow x_{i+1}}  \,\le\, 
 K (x_{i+1}) \,\| \D_i \|  \le   K (x_{i+1})\,\| \A_{p_i} - \A_{x_i} \| \cdot \| (\A_{x_i})^{-1} \| \,\le \\
& \le  K (x_{i+1})\cdot c_1\, \dist (x_i, p_i) ^ \a   \, \le  \,
 \ell e^{\e \min \{i+1,\, k-i-1\} } \cdot c_1 \delta^ \a e^{ -\a\g \min \{i, \,k-i\} } \le \\
 &\le \ell c_1  \delta^\a e^\e e^{(-\g \a + \e) \min \{i, \,k-i\} } \le c_2 \ell \delta^\a
 \quad\text{since $-\g \a + \e <0$.}
  \end{aligned}
\end{equation} 
 Since $\| u \| _i  \le \sqrt{2}\, \| u_1 \| _i$ we conclude using \eqref{v} that
\begin{equation} \label{Dv}
 \| \D_i \,v \| _{i+1} \le  \| \D_i \|_{x_{i+1} \leftarrow x_{i+1}} \| v \| _{i+1} \le 
  c_2 \ell \delta^\a  e^{\la + \e} \| u \| _i  \le c_3 \ell \, \delta^\a   \| u_1 \| _i.
 \end{equation} 
Setting $w=\A_{p_i} u =(\Id + \D_i ) \A_{x_i}u= (\Id + \D_i )v\,$ we observe that  
\begin{equation} \label{wv}
w_1 = v_1 + (\D_i v)_1 \quad \text{ and } \quad w_2 = v_2 + (\D_i v)_2
\end{equation}
 and hence using \eqref{v} and \eqref{Dv} we obtain that for small enough $\delta$
$$
 \| w_1 \| _{i+1} \ge \| v_1 \| _{i+1} -  \| \D_i v \| _{i+1}  \ge 
e^{\la - \e} \| u_1 \| _i - c_3 \ell \, \delta^\a   \| u_1 \| _i 
\ge e^{\la - 2\e}   \| u_1 \| _i \, ,
$$
which gives the  inequality in \eqref{Ki}. Similarly, using  $\| u_2 \| _i \le \| u_1 \| _i$, we get
$$
\begin{aligned}
 \| w_2 \| _{i+1} & \le \| v_2 \| _{i+1} +  \| \D_i v \| _{i+1} \le
   e^{\la' + \e} \| u_2 \| _i + c_3 \ell \, \delta^\a   \| u_1 \| _i \le \\
  &\le  (e^{\la' + \e} + c_3 \ell \, \delta^\a )  \| u_1 \| _i \le
   e^{\la' + 2\e} \| u_1 \| _i
\end{aligned}
$$
for all sufficiently small $\delta$.
Finally, if $u\ne0$  we get that
$$
 \| w_2 \| _{i+1} \,/ \,\| w_1 \| _{i+1} \le  e^{\la' +2\e} / e^{\la - 2\e} 
 =e^{\la' -\la+4\e}= \theta.
$$
This shows that $w \in K_{i+1}^\theta$ and the inclusion
$\A_{p_i} (K_i ) \subset K_{i+1}^\theta$ in \eqref{Ki} follows.
\vskip.2cm

We conclude that  \eqref{Ki} holds for each $i=0, \dots , k-1 $ and hence 
$\A_p^k (K_0 ) \subset K_{k}^\theta$. 
Since $\La_\ell$ is chosen compact and so that the Lyapunov splitting and 
Lyapunov metric are continuous on it, the cones $K^\theta_0$ and $K^\theta_k$ are close
if $\dist(x,f^k x)<\b$ is small. Thus we can ensure that
$K_{k}^\theta  \subset K_0$ if $\b$ small enough and hence 
$\A_p^k (K_0) \subset K_{0}$. Finally, using the  inequality in \eqref{Ki}
for each $i=0, \dots , k-1 $ we obtain that for any $u \in K_0$  
$$
\| \A_p^k\, u \| _{k} \ge \| (\A_p^k\, u)_1 \| _{k} \ge  e^{k(\la - 2\e)} \| u_1 \| _0 
\ge e^{k(\la - 2\e)} \| u \| _0 /\sqrt{2} \ge  e^{k(\la - 2\e)} \| u \| _k /2
$$
since Lyapunov norms at $x$ and $f^k x$ are close if $\delta$ and hence $\b$ is small enough.
Since $\A_p^k\, u \in K_0$ for any $u \in K_0$, we can iterate $\A_p^k$ and
use the inequality above to estimate $\la^{(p)}$ from below by the exponent of 
any $u \in K_0$:
$$
\la^{(p)} \ge \, \lim _{n \to \infty} \,(nk)^{-1} \log \| \A_p^{nk} \,u \|_{k} \ge k^{-1} \cdot
\lim _{n \to \infty} n^{-1} \log  \left( (e^{k(\la - 2\e)}/2)^n \,\| u \| _k \right) \ge
$$
$$
\ge  k^{-1} \left[k(\la - 2\e) - \log 2 \right] + k^{-1} \lim _{n \to \infty}n^{-1}\log \| u \| _k
\ge (\la - 2\e) -  k^{-1}\log 2 \ge \la - 3\e
$$
provided that $k$ is large enough. This completes the proof of the approximation
of the largest exponent \eqref{largestexp}. 

\vskip.1cm
To approximate all Lyapunov
exponents of $\A$ we consider cocycles $\wedge ^i \, \A$ induced by 
$\A$ on the $i$-fold exterior powers $\wedge ^i \, \Rm$, for $i= 1, \dots , d$.
The largest Lyapunov exponent of $\wedge ^i \, \A$ is 
$(\la _d + \dots + \la_{d-i+1})$, where $\la _1 \le \dots \le \la_{d}$ are the
Lyapunov exponents of $\A$ {\it listed with multiplicities.} If a periodic point $p=f^kp$
satisfies 
$$
|(\la _d + \dots + \la_{d-i+1})-(\la^{(p)} _d + \dots + \la^{(p)}_{d-i+1})| \le 3\e
\quad\text{for $i= 1, \dots , d$,}
$$
 then we obtain the approximation  $|\la_i - \la^{(p)}_i| \le 3d\e$ 
for all $i= 1, \dots , d$,  completing the proof of Theorem \ref{main fin}. Such a periodic 
point exists since we can take a set 
$$
P= \r_{\e', \ell'} \cap \La^1_{\e, \ell} \cap \dots \cap \La^d_{\e, \ell}$$ 
with $\mu (P) >0$, where $\La^i_\ell$ are the corresponding sets for all cocycles 
$\wedge ^i \, \A$, $i= 1, \dots , d$. Then the previous argument applies 
yielding \eqref{largestexp} for each $\wedge ^i \, \A$.

A similar argument shows that one can obtain a simultaneous approximation of all
Lyapunov exponents for several cocycles.

%%%%%%%%%%%%%%%%%%%%%%%%%%%%%%%%%%%%%%%%%%
%%%%%%%%%%%%%%%%%%%%%%%%%%%%%%%%%%%%%%%%%%
%%%%%%%%%%%%%%%%%%%%%%%%%%%%%%%%%%%%%%%%%%

\section{Subadditive cocycles and infinite-dimensional  Lyapunov norm}

%%%%%%%%%%%%%%%%%%%%%%%%%%%%%%%%%%%%%%%%%%
\subsection{Subadditive cocycles and their exponents} \label{subadditive}
A {\em subadditive cocycle} over a dynamical system $(X,f)$
is a sequence of functions $a_n:X\to \R\,$ such that 
$$
a_{n+k}(x)\le a_k(x) + a_n(f^kx) \quad \text{for all }x\in X \text{ and }k,n\in \N.
$$
For any ergodic measure-preserving transformation $f$ of a probability space 
$(X,\mu)$ and any subadditive cocycle over $f$ with integrable $a_n$, 
 the Subadditive Ergodic Theorem yields that for $\mu$ almost all  $x$ 
$$
 \lim_{n \to \infty} \frac {a_n(x)}{n}= \lim_{n \to \infty} \frac {a_n(\mu)}{n}=
\inf_{n \in \N} \frac{a_n (\mu)}{n} =: \nu (a,\mu) ,\; 
\text{ where }\; a_n(\mu)= \int_\M a_n(x) d\mu. 
$$
The limit $\nu (a,\mu) \ge -\infty$ is called the {\em exponent}\, of the cocycle $a_n$
with respect to $\mu$. 

For a $GL(V)$-valued cocycle $\A$ over $(X,f)$ 
it is easy to see that 
\begin{equation}\label{an}
a_n(x)=\log \|\A_x^n\| \quad\text{and}\quad
\ta_n(x)=\log \|(\A_x^n)^{-1}\|.
\end{equation}
are subadditive cocycles over $f$.   
For these continuous cocycles the Subadditive Ergodic Theorem gives the existence 
of the limits in \eqref{exponents}: for  $\mu$ almost all  $x$
\begin{equation}\label{la}
\la_+ (\A,\mu)=\lim_{n \to \infty} \,n^{-1} \log \|\A_x^n\| =\lim_{n \to \infty} n^{-1}{a_n(x)}= 
\nu(a,\mu), 
\end{equation}
\begin{equation}\label{-chi}
\quad -\la_- (\A,\mu) =\lim_{n \to \infty}\, n^{-1} \log \| (\A_x^n)^{-1} \| 
= \lim_{n \to \infty} n^{-1}\,{\ta_n(x)}= \nu(\ta,\mu) . 
\end{equation}
It is easy to see that $\la_- \le \la_+$ and both are finite. Also, for  $\mu$ almost all  $x$
\begin{equation}\label{-chi2}
-\la_- (\A,\mu) =\lim_{n \to \infty}\, n^{-1} \log \| \A_x^{-n} \|\hskip4.5cm
\end{equation}
since the integrals of $\log \| \A_x^{-n} \|$ and $\log \| (\A_x^n)^{-1} \|$ are equal.  We denote 
\begin{equation} \label{Lambda def}
\La =\La(\A,\mu)=\{ x\in X : \text{ equations \eqref{la}, \eqref{-chi}, and \eqref{-chi2} hold}\,\},
\end{equation}
which implies that both equalities in \eqref{exponents} hold for $x \in \La$. Clearly, $\mu(\La)=1$.
\vskip.1cm

We will also use the following more detailed result on the behavior of subadditive cocycles 
established  by A. Karlsson and G. Margulis.

\begin{proposition}\cite[Proposition 4.2]{KM} \label{GK} 
Let $a_n(x)$ be an integrable subadditive cocycle with exponent $\la>-\infty$ 
over an ergodic measure-preserving system $(X,f,\mu)$.
Then there exists a  set  $E \subset X$ with $\mu(E)=1$ such that 
for each $x\in E$ and each $\e >0$ there exists an integer $L=L(x,\e)$ so that
the set $S=S(x,\e, L)$ of integers $n$ satisfying 
\begin{equation}\label{good n}
  a_n(x)-a_{n-i} (f^ix) \ge (\la-\e)i \quad\text{for all $i$ with }\,L\le i \le n
\end{equation}
is infinite. 
(In fact, $S$ has positive asymptotic upper density\, \cite{GK}).
%$$ \overline{\text{Dens}}\,(S_x) \overset{\text{def}}{=}\,  \limsup \frac1N \left|\, S_x \cap [0, N-1] \,\right|  > 1-\rho. $$ 
\end{proposition}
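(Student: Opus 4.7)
The plan is to first normalize by subtracting the linear term $(\la-\e)n$ from $a_n$ so that Kingman's Subadditive Ergodic Theorem supplies a strictly positive exponent, then to reformulate the target inequality as a ``record condition'' on a reverse cocycle, and finally to extract infinitely many such records via a Fatou/ergodicity argument. Set $\tilde a_n(x) := a_n(x) - (\la-\e)n$; this is integrable and subadditive with exponent $\nu(\tilde a, \mu) = \e > 0$, so by the Subadditive Ergodic Theorem there is a full-measure $f$-invariant set $E$ on which $\tilde a_n(x)/n \to \e$, and in particular $\tilde a_n(x) \to +\infty$. In these terms the desired bound rewrites as $\tilde a_n(x) \ge \tilde a_{n-i}(f^i x)$ for all $L \le i \le n$.

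Next I would pass to the natural extension $(\hat X, \hat f, \hat\mu)$ of $(X,f,\mu)$ to assume $f$ invertible (the lift $\hat a_n := a_n \circ \pi$ preserves subadditivity, integrability, and the exponent, and projecting yields the claim on $(X,f,\mu)$). Define the reverse cocycle $b_k(y) := a_k(f^{-k} y)$. Using $f$-invariance of $\mu$ one checks that $b$ is an integrable subadditive cocycle over $g := f^{-1}$ with the same exponent $\la$, so that $\beta_k(y) := b_k(y) - (\la-\e)k$ tends to $+\infty$ almost surely by Kingman. Substituting $y = f^n x$ yields $\tilde a_n(x) = \beta_n(y)$ and $\tilde a_{n-i}(f^i x) = \beta_{n-i}(y)$, and with $m := n-i$ the target condition becomes $\beta_n(f^n x) \ge \beta_m(f^n x)$ for all $0 \le m \le n-L$; that is, $n$ must be a weakly $L$-delayed upper record of the sequence $k \mapsto \beta_k(f^n x)$.

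For each fixed $y$ in the full-measure set where Kingman holds for $b$, the sequence $\{\beta_k(y)\}_k$ tends to $+\infty$ and hence has infinitely many strict upper records, and every strict record is a fortiori a weakly $L$-delayed record for any $L \ge 1$. The principal obstacle, which I expect to be the main difficulty, is the coupling: in our statement $x$ is held fixed, so the base point $y_n := f^n x$ drifts with $n$ and we are not simply reading records off one deterministic sequence. To close this gap I would aim to show that the slice sets $R_L^n := \{y : \beta_n(y) \ge \max_{0 \le m \le n - L} \beta_m(y)\}$ satisfy $\mu(R_L^n) \ge c > 0$ for all $n$ large once $L = L(x,\e)$ is chosen so that the deviation $\psi_k(y) := b_k(y) - \la k$, which satisfies $\psi_k/k \to 0$ a.e.\ by Kingman, is uniformly small on a Luzin-type compact set of measure close to $1$. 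A Fatou/Borel--Cantelli argument, combined with the ergodicity of $f$ to upgrade an almost-invariant positive-measure set, would then give $\mu(\{x : f^n x \in R_L^n \text{ infinitely often}\}) = 1$, producing the required infinite set $S(x,\e,L)$.
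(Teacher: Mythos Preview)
The paper does not supply its own proof of this proposition; it is quoted from Karlsson--Margulis \cite{KM} (with the density sharpening attributed to \cite{GK}) and invoked as a black box. So there is nothing in the paper to compare your attempt against directly.

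On the merits, your reduction is sound up to the point you yourself flag: normalizing to $\tilde a_n$, passing to the reverse cocycle $b_k(y)=a_k(f^{-k}y)$ over $f^{-1}$, and rewriting \eqref{good n} as an $L$-delayed upper-record condition for the sequence $k\mapsto\beta_k(f^n x)$ are all correct. But the ``principal obstacle'' you identify---that the base point $y_n=f^n x$ drifts with $n$, so one is not reading records off a single deterministic sequence---is exactly where the content of the proposition lies, and your write-up does not resolve it; it only outlines a hoped-for strategy.

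That strategy has a concrete flaw. Even granting the unproved uniform bound $\mu(R_L^n)\ge c>0$, Fatou gives only $\mu\bigl(\limsup_n f^{-n}R_L^n\bigr)\ge c$. To upgrade this to full measure via ergodicity you would need the $\limsup$ set to be $f$-invariant, but
\[
f^{-1}\Bigl(\limsup_n f^{-n}R_L^n\Bigr)=\limsup_n f^{-(n+1)}R_L^n,
\]
and since the defining record index in $R_L^n$ genuinely changes with $n$ there is no reason for $R_L^n$ and $R_L^{n+1}$ to agree up to small measure. The set is therefore neither invariant nor demonstrably almost-invariant, so ergodicity does not apply and you are left with only a positive-measure set of good $x$, not a full-measure one. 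In short, the proposal stops precisely at the step where the actual argument of \cite{KM} has to do its work.
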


We will use the following corollary of this result.
\begin{corollary} \label{GKC} 
Let $a_n^{(1)}(x),\; \dots , a_n^{(m)}(x)$ be  integrable subadditive cocycles with exponents 
$\la^{(1)}>-\infty, \;\dots, \;\la^{(m)}>-\infty$, respectively, 
over an ergodic measure-preserving system $(X,f,\mu)$.
Then there exists a  set  $E \subset X$ with $\mu(E)=1$ such that 
for each $x\in E$ and each $\e >0$ there exists an integer $L=L(x,\e)$ so that
the set $S=S(x,\e, L)$ of integers $n$ satisfying the following condition is infinite:
\begin{equation}\label{good n 2}
a_n^{(j)}(x)-a_{n-i}^{(j)} (f^ix) \ge (\la^{(j)}-\e)i  \quad
\text{for all }  L\le i \le n  \text{ and }1 \le j \le  m.
\end{equation}
\end{corollary}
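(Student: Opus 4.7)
The plan is to reduce the multi-cocycle statement to the single-cocycle Proposition \ref{GK} by passing to the sum. Set $\tilde a_n(x) = \sum_{j=1}^m a_n^{(j)}(x)$. This is an integrable subadditive cocycle (sums of such are such), and by linearity of the integral together with Fekete's lemma its exponent is $\tilde \la = \sum_{j=1}^m \la^{(j)} > -\infty$.

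First I apply Proposition \ref{GK} to $\tilde a_n$ with parameter $\e/m$ in place of $\e$, obtaining a full measure set $E_0 \subset X$ such that for every $x \in E_0$ there is an integer $L_0 = L_0(x,\e)$ for which the set
$$\tilde S = \{\, n \in \N : \tilde a_n(x) - \tilde a_{n-i}(f^i x) \ge (\tilde \la - \e/m)\, i \ \text{for all } L_0 \le i \le n\,\}$$
is infinite. Next, by the Subadditive Ergodic Theorem applied to each $a_n^{(j)}$, there is a full measure set $E_j$ on which $i^{-1} a_i^{(j)}(x) \to \la^{(j)}$; hence for each $x \in E_j$ there exists $L_j = L_j(x,\e)$ with $a_i^{(j)}(x) \le (\la^{(j)} + \e/m)\, i$ for all $i \ge L_j$. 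I put $E = E_0 \cap \bigcap_{j=1}^m E_j$ and $L = \max(L_0, L_1, \dots, L_m)$; then $\mu(E) = 1$, and the infinite set $S$ in the statement will be $\tilde S$.

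The key step is to derive the individual lower bounds from the lower bound on the sum by using subadditive upper bounds on the other summands. For each $j$ and each $i \ge L$, subadditivity of $a_n^{(j)}$ gives the pointwise upper bound $a_n^{(j)}(x) - a_{n-i}^{(j)}(f^i x) \le a_i^{(j)}(x) \le (\la^{(j)} + \e/m)\, i$. Subtracting these upper bounds for $k \ne j$ from the Karlsson--Margulis lower bound on $\tilde a_n(x) - \tilde a_{n-i}(f^i x)$ for $n \in \tilde S$ yields
$$a_n^{(j)}(x) - a_{n-i}^{(j)}(f^i x) \ge (\tilde \la - \e/m)\, i - \sum_{k \ne j} (\la^{(k)} + \e/m)\, i = (\la^{(j)} - \e)\, i,$$
which is exactly condition \eqref{good n 2}, and holds for every $j$ simultaneously at every $n \in \tilde S$.

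The one subtlety worth flagging is that a naive approach of applying Proposition \ref{GK} to each $a_n^{(j)}$ separately and intersecting the resulting good index sets need not succeed: positive upper density of each individual set is not preserved under finite intersection, so one cannot a priori guarantee the intersection is even nonempty. The device above sidesteps this entirely, since Proposition \ref{GK} is used only for $\tilde a_n$; the individual lower bounds are then obtained for free from the cheap subadditive upper bounds on each $a_i^{(j)}(x)$, which are valid for all sufficiently large $i$ on the orbit of $x$ and therefore do not thin out $\tilde S$.
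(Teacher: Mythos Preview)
Your proof is correct and follows essentially the same approach as the paper: apply Proposition~\ref{GK} to the sum cocycle $\sum_j a_n^{(j)}$, use the Subadditive Ergodic Theorem to obtain upper bounds $a_i^{(j)}(x)\le (\la^{(j)}+\e')\,i$ for large $i$, and subtract the upper bounds for $k\ne j$ from the lower bound on the sum. The only cosmetic difference is that you apply Proposition~\ref{GK} with parameter $\e/m$ so as to land exactly on $(\la^{(j)}-\e)$, whereas the paper applies it with $\e$ and obtains $(\la^{(j)}-m\e)$, which is equivalent since $\e>0$ is arbitrary.
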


\begin{proof}
We apply Proposition  \ref{GK}  to the subadditive cocycle 
$$
a_n(x)=a_n^{(1)}(x)+ \cdots + a_n^{(m)}(x)
$$ 
with exponent 
$\la =\la^{(1)}+\dots + \la^{(m)}>-\infty$ and obtain the set $E'$ for this cocycle. 
Then for each $x\in E'$ and each $\e >0$ there exists  $L'=L'(x,\e)$ and an infinite set 
$S=S(x,\e,L')$ such that for all $n\in S$ we have
\begin{equation}\label{good nm}
\sum_{j=1}^m \left( a_n^{(j)}(x)-a_{n-i}^{(j)} (f^ix) \right) \ge \left(\sum_{j=1}^m \la^{(j)}- \e\right)i 
\;\quad\text{for all } L'\le i \le n.
\end{equation}
By the Subadditive Ergodic Theorem there exists a set $G$ of full measure such that
 for  each $x\in G$ and each $j=1,\dots , m$ we have
$ 
\lim_{n \to \infty} n^{-1} a_n^{(j)}(x)= \la^{(j)}$
and hence there exists $M=M(x,\e)$ such that for 
$$
a_i^{(j)}(x) \le (\la^{(j)}+\e)i \quad\text{for all }\, i \ge M \text{ and } 1\le j \le m.
$$
It follows  by subadditivity that for all $j$  and all $n\ge M$
$$
  a_n^{(j)}(x)- a_{n-i}^{(j)} (f^ix)\le a_i^{(j)}(x) \le  (\la^{(j)}+\e)i \quad\text{for all }\, M \le i \le n.
  $$
We set $L=L(x,\e)=\max\{L',M\}$ and $E=G\cap E'$,  with $\mu(E)=1$.
Subtracting the inequalities above for $j=2, \dots , m$ from \eqref{good nm}
we get that for all $x \in E$ and $n\in S$ 
$$
 a_n^{(1)}(x)-a_{n-i}^{(1)} (f^ix) \ge (\la^{(1)}-m\e)i \quad\text{for all }\, i \text{ with } L\le i \le n.
 $$
The inequalities for the = cocycles $a_n^{(j)}$, $j=2, \dots, m$, follow similarly.
\end{proof}

%%%%%%%%%%%%%%  Lyapunov norm %%%%%%

\subsection{Lyapunov norm for upper and lower Lyapunov exponents} \label{Lyapunov norm}

Since the Multiplicative Ergodic Theorem does not apply in the infinite dimensional setting,
we use a cruder version of Lyapunov norm which takes into account only  upper and lower 
Lyapunov exponents. 
We fix  $\e >0$ and for any point $x\in \La$ we define the {\it Lyapunov norm}  of $u\in \V_x$
 as follows:
\begin{equation}  \label{Lprod}
\|u\|_{x} = \|u\|_{x,\e}  = \sum_{n=0}^\infty \, \|  \A_x^n \,u\| \,e^{-(\la_+ +\e) n}
+\sum_{n=1}^\infty \, \|  \A_x^{-n} \,u\| \,e^{(\la_- -\e) n}.
\end{equation}
By the definition \eqref{Lambda def} of $\La$, both series converge exponentially.
Properties of this norm were obtained in \cite[Proposition 3.1]{KS17}. They are similar
to those of the usual Lyapunov norm discussed in Section \ref{SLyapunovmetric}.
In particular, %for each point $x \in \La$ 
\begin{equation}  \label{estAnorm inf}
 \| \A_x^n \|_{x_n \leftarrow x_0} \le  e^{n (\la_{+\,}+ \e)}\quad\text{and}\quad
\| (\A_x^n)^{-1} \|_{x_0 \leftarrow x_n} \le  e^{n (-\la_{-\,}+ \e)} \quad\text{for }x\in \La
\end{equation}
and the ratio to the background norm $\|u\|_{x}/\|u\|$ is ``tempered". Specifically, there exist
an $f$-invariant set $\La' \subset \La$ with $\mu(\La')=1$ and
a measurable function $K_{\e}(x)$ on $\La'$ satisfying conditions 
\eqref{estK}  and \eqref{estLnorm}.
For any $\ell >1$ we  define 
\begin{equation}  \label{Pset inf}
\La_\ell = \{x \in \La' : \; \; K(x) \le \ell \},
\end{equation}
and note that $\mu (\La_\ell) \to 1$ as $\ell \to \infty$.

We use this Lyapunov norm to obtain estimates similar to \eqref{estAnorm inf}  
for {\em any} point $p\in X$ whose trajectory is close to that of a point $x\in \La_\ell$. 
Since the Lyapunov norm may not exist at points $f^ip$ we will use the Lyapunov 
norms at the  corresponding points  $f^ix$ for the estimates.

\begin{lemma} \cite[Lemma 4.1]{KS17} \label{mainest}
Let $f$ be an ergodic invertible measure-preserving transformation 
of a probability space $(X,\mu)$, let $\A$ be an  $\a$-H\"older 
cocycle over $f$ with the set $\{\A_x: x\in X\}$ bounded in $GL(V)$,
and let  $\la_+$ and $\la_-$ be the upper and lower Lyapunov exponents of $\A$. 

Then for any $\e>0$ and $\gamma$ with $\e < \g \a$ 
there exists a constant  $c=c\,(\A, \,\a \g - \e)$ such that 
for  any  point $x$  in $\La_{\e,\ell}$ with $f^k x$ in $\La_{\e,\ell}$
and any point $p \in X$ such that the orbit segments $x, fx, \dots , f^k x$ and 
$\,p, fp, \dots , f^k p$ satisfy with some $\delta>0$
\begin{equation}  \label{d-close}
\dist (f^i x, f^i p) \le \delta e^{ -\g\, \min\{i,\,k-i \} }
\quad\text{for every }\,i=0, \dots , k
\end{equation}
we have for every $\,i=0, \dots , k$
\begin{equation}  \label{d-close-coc}
  \| \A_p^i \| \le \ell\, \| \A_p^i \|_{x_i \leftarrow x_0} \le 
\ell \, e^{c \,  \ell \delta^\a} e^{i (\la_+ + \e)} \quad\text{ and}
\end{equation}
\begin{equation}  \label{d-close-coc-1}
  \| (\A_p^i)^{-1} \| \le \ell e^{\e \min\{i,\,k-i \}} \| (\A_p^i)^{-1} \|_{x_0 \leftarrow x_i} \le 
\ell e^{\e \min\{i,\,k-i \}} e^{c \,  \ell \delta^\a} e^{i (-\la_- + \e)}.
\end{equation}
%Moreover, $$ \| \A^k_{f^j y}\| \le \ell\, \| \A^k_{f^j y} \|_{f^{k+j}x\, \leftarrow f^jx} \le \ell \, e^{c \,  \ell \delta^\a} e^{k (\la + \e)} $$
\end{lemma}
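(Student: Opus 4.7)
My plan is to estimate $\|\A_p^i\|_{x_i \leftarrow x_0}$ by writing each factor $A(p_j)$ as $A(x_j)$ plus an error term, bounding the error uniformly in the Lyapunov norm, and then multiplying the one-step bounds. The estimate for $\|(\A_p^i)^{-1}\|$ will follow by repeating the argument for the inverse cocycle.

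First, I would set $D_j = A(p_j) - A(x_j)$ and use the H\"older bound to get $\|D_j\| \le M\,\dist(x_j,p_j)^\a \le M\delta^\a e^{-\a\g\min\{j,k-j\}}$. Converting to the Lyapunov norm through the analogue of \eqref{estMnorm} gives $\|D_j\|_{x_{j+1}\leftarrow x_j} \le K_\e(x_{j+1})\|D_j\|$. Applying the temperedness condition \eqref{estK} forward from $x_0$ and backward from $x_k$, both of which lie in $\La_{\e,\ell}$, I obtain $K_\e(x_{j+1}) \le \ell e^{\e\min\{j+1,k-j-1\}}$, so
\[
\|D_j\|_{x_{j+1}\leftarrow x_j} \le c_0\,\ell\,\delta^\a\, e^{-(\a\g-\e)\min\{j,k-j\}}.
\]
Combined with $\|A(x_j)\|_{x_{j+1}\leftarrow x_j} \le e^{\la_++\e}$ from \eqref{estAnorm inf}, this yields a one-step estimate of the form
\[
\|A(p_j)\|_{x_{j+1}\leftarrow x_j} \le e^{\la_++\e}\bigl(1 + c_1\ell\delta^\a e^{-(\a\g-\e)\min\{j,k-j\}}\bigr).
\]

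Next I would multiply these one-step estimates along the orbit, take logs, and use $\log(1+t)\le t$ together with the key summation
\[
\sum_{j=0}^{k-1} e^{-(\a\g-\e)\min\{j,k-j\}} \le \frac{2}{1-e^{-(\a\g-\e)}},
\]
which is a constant depending only on $\a\g-\e$. This yields $\|\A_p^i\|_{x_i\leftarrow x_0}\le e^{c\ell\delta^\a}e^{i(\la_++\e)}$ with $c=c(\A,\a\g-\e)$, and the left-hand inequality in \eqref{d-close-coc} follows from $\|\A_p^i\|\le K_\e(x_0)\|\A_p^i\|_{x_i\leftarrow x_0}\le\ell\|\A_p^i\|_{x_i\leftarrow x_0}$. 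For \eqref{d-close-coc-1}, I would rerun the argument for the inverse cocycle: the Lyapunov-norm estimate $\|A(x_j)^{-1}\|_{x_j\leftarrow x_{j+1}}\le e^{-\la_-+\e}$ comes from the second half of \eqref{estAnorm inf}, and the required H\"older bound on $A(p_j)^{-1}-A(x_j)^{-1}$ is built directly into the metric $d(A,B)=\|A-B\|+\|A^{-1}-B^{-1}\|$ on $GL(V)$. The extra factor $e^{\e\min\{i,k-i\}}$ in \eqref{d-close-coc-1} now appears when converting back to the background norm, since $\|(\A_p^i)^{-1}\|\le K_\e(x_i)\|(\A_p^i)^{-1}\|_{x_0\leftarrow x_i}\le\ell e^{\e\min\{i,k-i\}}\|(\A_p^i)^{-1}\|_{x_0\leftarrow x_i}$, whereas for the forward cocycle the conversion only involves $K_\e(x_0)\le\ell$.

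The main technical obstacle is ensuring that the cumulative error stays uniformly bounded independently of both $i$ and $k$. Each single step loses a factor $K_\e(x_{j+1})\le\ell e^{\e\min\{j+1,k-j-1\}}$, which in isolation grows with the distance from the endpoints. The assumption $\e<\a\g$ is precisely what makes $\sum_{j}e^{-(\a\g-\e)\min\{j,k-j\}}$ converge to a $k$-independent constant, so that the product of the one-step bounds telescopes into a single factor $e^{c\ell\delta^\a}$ rather than an exponential in the length of the shadowed segment.
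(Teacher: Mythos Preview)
Your proposal is correct and follows exactly the standard argument; note that the paper itself does not reprove this lemma but cites \cite[Lemma 4.1]{KS17}, and the computation you sketch is essentially the one that appears there (and in the analogous estimate \eqref{DE} in the proof of Theorem \ref{main fin} of the present paper). Your identification of the key point---that the condition $\e<\a\g$ makes $\sum_j e^{-(\a\g-\e)\min\{j,k-j\}}$ bounded independently of $k$---and your explanation of the asymmetry between \eqref{d-close-coc} and \eqref{d-close-coc-1} via the conversion factors $K_\e(x_0)\le\ell$ versus $K_\e(x_i)\le\ell e^{\e\min\{i,k-i\}}$ are both on target.
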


%%%%%%%%%%  PROOF of INFINITE %%%%%%%%%%%%%

\section{Proof of Theorem \ref{main inf}}
We fix a  sufficiently small $\e'>0$ and sufficiently large $\ell' \in \N$ so that $\mu(\r _{\e',\ell'})>0.9$
and so that we can apply the Closing Lemma \ref{closing} and get $\g =\g (\e',\ell') >0$. 
We define $\e _0 = \min \{  \a \g /4 ,\,  \e' \} >0$.  We fix $0<\e<\e_0$ and consider the sets 
$\La_{\ell, \e}$ for the cocycle $\A$. We choose $\ell$ sufficiently large so that $\mu (P)>0.8$ 
where 
$$
P=\La _{\ell, \e} \cap \r_{\ell', \e'}.
$$
We apply Corollary \ref{GKC} to the subadditive cocycles 
$$
a_n^{(1)}(x)=a_n(x)=\log \|\A^n_x\|\quad\text{and}\quad a_n^{(2)}(x) =\tilde a_n(x)=\log \|(\A^n_x)^{-1}\|
$$
and obtain the set $E$ of full measure %(or $E_\rho$ with $\mu(E_\rho)>1-\rho$) 
for these two cocycles.
\vskip.1cm

We take a compact set $K \subset (P \cap E)$ with $\mu (K)> 0.7$, let 
$\nu$ be the restriction of $\mu$ to $K$, and denote by $G$ the support of $\nu$. Then $G$ is a compact 
subset of $K$ satisfying $\nu (X\setminus G) =0$, $\nu (G) =\mu (K)> 0.7$,\, and $\nu (U)>0$ for any subset $U$ (relatively) open in $G$. We fix a countable basis $\{ U_j\}$, $U_j \subset G$, 
 for the topology of $G$ and note  that  $\mu (U_j) = \nu (U_j)>0$. We denote by $G'$
 the subset of full $\mu$-measure in $G$ on which the Birkhoff Ergodic Theorem holds 
 for the indicator functions of each $U_j$, that is for each $x\in G'$
\begin{equation}  \label{bet}
\lim_{n \to \infty} \,n^{-1}\left| \{i:  0 \le i  \le n-1 \text{ and } f^i x \in U_j \} \right|
= \mu(U_j)> 0 \quad\text{for all }j\in \N.
\end{equation}

For the remainder of the proof we fix a point $x\in G'$  and choose $\sigma = 4\e/(\a\g)$.
We establish the following lemma, which is a refinement of \cite[Lemma 8]{G} for our setting.

\begin{lemma} \label{G}
For each $x \in G'$, $\beta >0$ and $\sigma>0$ there exists an integer 
 $N=N(x,\sigma,\b)$ such that  for each  $n \ge N$ there is an integer $k$ satisfying
 $$
 n(1 + \sigma) \le  k \le  n(1 + 2\sigma), \;\quad f^kx\in  G, 
\quad\text{and}\quad  \dist(x, f^k x) < \b.
$$
\end{lemma}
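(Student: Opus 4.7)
The plan is to apply the Birkhoff ergodic theorem along the orbit of $x$ to the indicator function of a small neighborhood of $x$ inside $G$, and then count visits to this neighborhood inside the window $[n(1+\sigma),\,n(1+2\sigma)]$. The density of visits is bounded below by a fixed positive number while the window has length of order $\sigma n$, so the count in the window grows linearly in $n$ and in particular is eventually positive.

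First I would select a basis element that does the job. Since $\{U_j\}$ is a countable basis for the topology of $G$ and $x\in G' \subset G$, there is an index $j_0$ with $x \in U_{j_0} \subset G \cap B_{\b}(x)$, where $B_\b(x)$ denotes the open ball of radius $\b$ in $X$. Because $U_{j_0}$ is a nonempty relatively open subset of the support $G$ of $\nu$, we have $\mu(U_{j_0}) = \nu(U_{j_0}) > 0$.

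Next, by the defining property \eqref{bet} of $G'$ applied at $j=j_0$, and writing $N_m = \#\{\,i : 0\le i < m,\; f^i x \in U_{j_0}\,\}$, we have $N_m/m \to \mu(U_{j_0})$. Taking integers $m_1=\lceil n(1+\sigma)\rceil$ and $m_2=\lfloor n(1+2\sigma)\rfloor$, we get
$$
\#\{\,i: m_1 \le i \le m_2,\; f^i x \in U_{j_0}\,\} = N_{m_2+1}-N_{m_1} = \mu(U_{j_0})(m_2-m_1+1)+o(n),
$$
and $m_2-m_1\ge \sigma n-2$, so the count is at least $\mu(U_{j_0})\,\sigma n +o(n) \to \infty$. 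Hence there is $N=N(x,\sigma,\b)$ such that for every $n\ge N$ at least one integer $k$ satisfies $m_1\le k\le m_2$ and $f^k x\in U_{j_0}$. For such $k$ we have $n(1+\sigma)\le k\le n(1+2\sigma)$, $f^k x \in U_{j_0}\subset G$, and $\dist(x,f^k x)<\b$, as required. The argument is essentially Birkhoff plus counting, so there is no real obstacle; the only point to be careful about is the choice of $U_{j_0}$ so that membership in $U_{j_0}$ simultaneously forces membership in $G$ and proximity to $x$, and this is built into the construction of the basis $\{U_j\}$ of $G$.
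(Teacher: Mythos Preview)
Your proof is correct and follows essentially the same approach as the paper: both choose a basis element $U_j\subset G$ inside a small ball around $x$, apply the Birkhoff limit \eqref{bet} to the indicator of $U_j$, and conclude that the number of visits in the window $[n(1+\sigma),\,n(1+2\sigma)]$ grows linearly in $n$. The only cosmetic differences are that the paper uses explicit two-sided bounds $(1-c)tn<\phi(n)<(1+c)tn$ in place of your $o(n)$ notation and works with $B(x,\beta/2)$ rather than $B(x,\beta)$.
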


\begin{proof} We fix  $x \in G'$ and $\beta >0$ and consider a set $U_j$ from the countable base
 that  is contained in the open ball $B(x,\beta/2)$.  Since $x\in G'$, condition \eqref{bet} holds for $x$. 
  Denoting the set of the return times $i$  to $U_j$  by $T$ 
and letting $\phi(n)= |\,T \cap [0, n-1]\,|$ we get that $\phi(n) / n \to t=\mu (U_j)>0$.
 For any $\sigma>0$ we can take $c>0$ such that 
 $$
 (1+c)/(1-c)<(1 + 2\sigma) /(1 + \sigma)
 $$
  and then $M$ such that 
 $$
 (1-c) tn <\phi(n)  <(1+c) tn \quad\text{for all }n \ge M.
 $$
Using this and the fact that  $(1-c)(1 + 2\sigma)-(1+c)(1 + \sigma) >0$ by the choice of $c$, 
we obtain that there exists $N>M$ 
such that for all $n \ge N$ we have
$$
\phi(n(1 + 2\sigma))-\phi(n(1 + \sigma))> [(1-c)(1 + 2\sigma)-(1+c)(1 + \sigma)]\,tn >1,
$$
which means there exists $k$ between  $n(1 + \sigma)$ and  $ n(1 + 2\sigma)$ such that
$f^kx$ is in  $U_j \subset (G \cap B(x,\beta/2))$.
\end{proof}

Since $x\in G' \subset E$, by  Corollary  \ref{GKC} there exists an integer $L=L(x,\e)$ such that 
for any $n\in S_x$ and  any $i$ with $L\le i \le n$
\begin{equation}\label{good n3}
a_n(x)-a_{n-i} (f^ix) \ge (\la_+-\e)i \quad \text{ and } \quad
\tilde a_n(x)-  \tilde a_{n-i} (f^ix) \ge (-\la_- -\e)i .
\end{equation}

We conclude that for any $\beta>0$ there exist arbitrarily large $n$ for which
the above property holds and a corresponding $k=k(n)$ satisfying the conclusion of 
Lemma \ref{G}. 
We will later choose $\delta>0$ sufficiently small so that \eqref{Cdelta} below is satisfied,
and take $\b=\b(\delta, \ell, \g)>0$ from the Closing Lemma \ref{closing}, which then gives
 a periodic point $p=f^kp$ such that
 \begin{equation}  \label{x-p}
\dist (f^i x, f^i p) \le \delta e^{ -\g\, \min\{i,\,k-i \} }
\quad\text{for every }i=0, \dots , k.
\end{equation}

\vskip.2cm
 
\noindent {\bf Obtaining upper estimates for $\| \A^k_p\|$ and $\| (\A^k_p)^{-1}\|$.} $\,$
Since $x$ and $p$ satisfy \eqref{x-p}  we can apply
 Lemma \ref{mainest} with  $i=k$ to get
  $$
 \| \A_p^k \| \le \ell e^{c \,  \ell \delta^\a} e^{k (\la_{+\,} + \e)} 
\quad\text{and}\quad
 \| (\A_p^k)^{-1} \| \le \ell e^{c \,  \ell \delta^\a} e^{k (-\la_{-\,} + \e)}. 
$$
For $k$ sufficiently large compared to $\ell$ and $c=c(\A,\, \a\g-\e)$, we obtain 
\begin{equation}\label{upper}
k^{-1} \log \| \A_p^k \| \le 
\la_{+\,} + \e+ k^{-1}(\log \ell + c \,  \ell \delta^\a)  \le  \la_+ + 2\e, \quad \text{and}
\end{equation}
\begin{equation}\label{upper -}
k^{-1} \log \| (\A_p^k)^{-1} \| \le 
-\la_- + \e+ k^{-1}(\log \ell + c \,  \ell \delta^\a)  \le  -\la_- + 2\e.
\end{equation}

\vskip.2cm 

\noindent {\bf Obtaining a lower estimate for $\| \A^k_p\|$.} $\,$
First we bound $\|\A^n_x -\A_p^n \|$.
 $$
\begin{aligned}
\A^n_x -\A_p^n \,&=\, 
\,\A^{n-1}_{x_1}\circ (\A_x -  \A_p) + (\A^{n-1}_{x_1}- \A^{n-1}_{p_1})\circ \A_p \\
& =\, \A^{n-1}_{x_1}\circ (\A_x -  \A_p) + 
\left( \A^{n-2}_{x_2} \circ (\A_{x_1} - \A_{p_1}) + 
 (\A^{n-2}_{x_2} - \A^{n-2}_{p_2})\circ \A_{p_1}\right) \circ \A_p \\
& = \,  \A^{n-1}_{x_1}\circ (\A_x -  \A_p) + 
\A^{n-2}_{x_2} \circ (\A_{x_1} - \A_{p_1})  \circ \A_p + 
 (\A^{n-2}_{x_2} - \A^{n-2}_{y_2})\circ \A^2_p \\
 & = \dots = \, \sum_{i=0}^{n-1} \,\A^{n-(i+1)}_{x_{i+1}} \circ (\A_{x_i}-\A_{p_i}) \circ \A^i_p.
\end{aligned}
$$
Hence we can estimate the norm as follows
\begin{equation}\label{e1}
 \| \A^n_x -\A_p^n \| \le \sum_{i=0}^{n-1} 
 \,\|\A^{n-(i+1)}_{x_{i+1}}\| \cdot \|\A_{x_i}-\A_{p_i}\| \cdot \|\A^i_p\|.
\end{equation}
Since $n$ satisfies \eqref{good n3} with $a_n(x)=\log \| \A^n_x \|$,
\begin{equation}\label{e l} 
 \|\A^{n-(i+1)}_{x_{i+1}}\| \le \|\A^n_x\|\, e^{-(i+1)(\la_+ -\e)} \quad\text{for all $i$ with }L\le i \le n-1.
\end{equation}
Since $n < k$, applying Lemma \ref{mainest}  we get
\begin{equation}\label{e r}
 \|\A^i_p\| \le \ell \,e^{c \,  \ell \delta^\a} e^{i (\la_{+\,} + \e)}\quad\text{ for  }i =0,\dots, n.
\end{equation}
Using \eqref{x-p} and H\"older continuity of $\A$ we obtain   
$$
  \|\A_{x_i}-\A_{p_i}\| \le M\,\dist(x_i,p_i)^\a  
  \le M( \delta \, e^{ -\g\, \min\{i,k-i \}} ) ^\a  = M \delta^\a \, e^{ -\a \g\, \min\{i,\,k-i \}}. 
  $$
We claim that the exponent satisfies
\begin{equation}\label{exponent}
\a \g \min\{i,k-i \} \ge 4\e i \quad \text{for } \;i=0,\dots ,n.
\end{equation}
If $i=\min\{i,k-i \}$ this holds since $\e<\a \g/4$.\, 
 If $k-i=\min\{i,k-i \}$ then 
$$
\a \g \min\{i,k-i \} =\a \g(k-i) \ge 4\e i \quad\text{is equivalent to}\quad i \le  k/(1+4\e/(\a\g)),
$$
which holds for $i \le n$ since $n\le k/(1+\sigma)$ and $\sigma = 4\e/(\a\g)$.
Thus we conclude that
\begin{equation}\label{decay}
  \|\A_{x_i}-\A_{p_i}\| \le M \delta^\a \, e^{ -4\e i}  \quad \text{for } i=0, \dots, n.
\end{equation}
Combining \eqref{e l}, \eqref{decay}, and \eqref{e r} we obtain that for $L \le i \le n-1$
\begin{equation}\label{term}
\begin{aligned}
&\|\A^{n-(i+1)}_{x_{i+1}}\| \cdot \|\A_{x_i}-\A_{p_i}\| \cdot \|\A^i_p\| \le\\
&\le  \|\A^n_x\|\, e^{-(i+1)(\la_{+\,} -\e)} \cdot M \delta^\a \, e^{ -4\e i} \cdot
 \ell \,e^{c \,  \ell \delta^\a} e^{i(\la_{+\,} + \e)} <  C_1(\delta)\, \|\A^n_x\|\,  e^{ -\e i},
\end{aligned}
\end{equation}
where $\,C_1(\delta)=\delta^\a M \ell\, e^{c \,  \ell \delta^\a -\la_{+\,}+\e}$,\,
and we conclude that 
\begin{equation}\label{e2}
\begin{aligned}
& \sum_{i=L}^{n-1} 
\,\|\A^{n-(i+1)}_{x_{i+1}}\| \cdot \|\A_{x_i}-\A_{p_i}\| \cdot \|\A^i_p\|  \le \\
& \le \, C_1(\delta) \,\|\A^n_x\|\, \sum_{i=L}^{n-1}  e^{-\e i} 
\,\le\,  C_1(\delta) \,\|\A^n_x\|\, \frac{1}{1-e^{-\e} } \,=\, C_2(\delta)\, \|\A^n_x\|.
\end{aligned}
\end{equation}

\vskip.1cm

Since the set $\{\A_x: x\in X\}$ is bounded in $GL(V)$, there exists $ \la_*\le  \la_- \le \la_+$
such that $\| (\A_x)^{-1}\| \le e^{-\la_*}$ for all $x\in X$. For $i<L$ we estimate 
$$
\|\A^{n-i}_{x_{i}}\|  \le \|\A^n_x\| \cdot \| (\A_{x}^{i})^{-1}\| \le \|\A^n_x\|\, e^{-\la_* i}.
$$
Then using \eqref{e r} and \eqref{decay} we obtain
\begin{equation}\label{e3}
\begin{aligned}
& \sum_{i=0}^{L-1}  
\|\A^{n-(i+1)}_{x_{i+1}}\| \cdot \|\A_{x_i}-\A_{p_i}\| \cdot \|\A^i_p\| \le \\
&\le \, \sum_{i=0}^{L-1} \|\A^n_x\|\, e^{-(i+1)\la_*} \cdot M\delta^\a e^{-4\e i} \cdot
 \ell e^{c \,  \ell \delta^\a} e^{i(\la_{+\,} + \e)}  \le  \\
 &\le L \cdot \delta^\a M \ell e^{c \,  \ell \delta^\a }\, e^{-\la_*+(\la_{+\,}-\la_*)L}  \cdot
  \|\A^n_x\| = C_3(\delta) \, \|\A^n_x\|.
\end{aligned}
\end{equation}
%where $C_3(\delta)=Le^{-\la_*+(\la_{+\,}-\la_*)L}\, \delta^\a M \ell e^{c \,  \ell \delta^\a }$, as $\la_{+\,}-\la_*\le 0$.
%\vskip1cm

Combining  estimates \eqref{e1},  \eqref{e2} and  \eqref{e3} we obtain 
 $$
  \| \A^n_x -\A_p^n \| \,\le\, \|\A^n_x\|\, (C_2(\delta)   + C_3(\delta))
  \,\le \, \|\A^n_x\|/2
 $$
 since by the choice of $\delta>0$ we have
 \begin{equation}\label{Cdelta}
 C_2(\delta)   + C_3(\delta) =  
 \delta^\a M \ell e^{c \,  \ell \delta^\a} \left((1-e^{-\e})^{-1}e^{-\la_{+\,} +\e}+Le^{-\la_*+(\la_{+\,}-\la_*)L}\right) < 1/2.
\end{equation}
%(Here $c$ is from Lemma \ref{mainest} and $M$ is the H\"older constant.) 
Hence 
$$
 \|\A^n_p\| \,\ge\, \|\A^n_p\| - \| \A^n_x -\A_p^n \| \,\ge\, \|\A^n_x\|/2 
 > e^{(\la_{+\,} -\e)n}/2,
$$
provided that $n$ is sufficiently large for the limit in \eqref{la}. Since
$\A^n_p=(\A^{k-n}_{f^np})^{-1} \circ \A^k_p$,  
$$
\| \A^n_p\| \le \| (\A^{k-n}_{f^np})^{-1}\| \cdot \|\A^k_p\| \le \|\A^k_p\| \, e^{-\la_* (k-n)}.
$$
Hence 
$$
\|\A^k_p\|  \ge e^{\la_* (k-n)} \| \A^n_p\|   >  e^{(\la_{+\,} -\e)n+\la_* (k-n)}/2 >
e^{(\la_{+\,} -\e)k-(\la_{+\,}-\la_*) (k-n)}/2, \, \text{ and so} 
$$
$$
k^{-1} \log \|\A^k_p\| > k^{-1} [(\la_{+\,} -\e)k-(\la_{+\,} -\la_*) (k-n) - \log 2].
$$
Since $k-n< 2\sigma n <  2\sigma k$ and $\sigma=4\e/(\a \g)$ we obtain
$$
k^{-1} \log \|\A^k_p\| >  \la_{+\,} -\e- k^{-1} \log 2 -(\la_{+\,} -\la_*) 2\sigma > \la_{+\,} -2\e - 8\e(\la_{+\,} -\la_*)/(\a\g)
$$
if $n$ and hence $k$ are sufficiently large. 

Since $0<\e<\e_0$ is arbitrary and
$\la_+ ,\, \la_*, \a, \g$ do not depend on $\e$, this inequality above together with \eqref{upper}
imply the approximation of $\lambda_+(\A,\mu)$ by $k^{-1} \log \|\A^k_p\|$.

%%%%%%%%%%%%%% both  largest and smallest 
\vskip.3cm

\noindent {\bf Obtaining a lower estimate for $\| (\A^k_p)^{-1}\|$.} $\,$
We use the equation
$$
 (\A^n_x)^{-1} -(\A_p^n)^{-1}  \,=\, 
  \sum_{i=0}^{n-1} \,(\A^i_p)^{-1}  \circ \left( (\A_{x_i})^{-1}-(\A_{p_i})^{-1} \right) 
  \circ (\A^{n-(i+1)}_{x_{i+1}})^{-1}
$$
and estimate the norm of the difference similarly to \eqref{e1}: 
\begin{equation}\label{e1-}
 \|(\A^n_x)^{-1} -(\A_p^n)^{-1}\|  \,=\, 
  \sum_{i=0}^{n-1} \,\|(\A^i_p)^{-1}\|  \cdot \| (\A_{x_i})^{-1}-(\A_{p_i})^{-1} \| 
  \cdot \|(\A^{n-(i+1)}_{x_{i+1}})^{-1}\|.
\end{equation}
Since $n$ satisfies the second part of \eqref{good n3} with $\tilde a_n(x)=\log \| (\A^n_x)^{-1} \|$,
 $$ 
 \| (\A^{n-(i+1)}_{x_{i+1}})^{-1}\| \le \|(\A^n_x)^{-1}\|\, e^{-(i+1)(-\la_- -\e)} \quad\text{for all $i$ with }L\le i \le n.
 $$
  Since $n < k$, using \eqref{d-close-coc-1} of Lemma \ref{mainest}   we get
 $$
 \|(\A_{p}^i)^{-1}\| \le \ell e^{\e \min\{i,\,k-i \}} \,e^{c \,  \ell \delta^\a} e^{i (-\la_{-\,} + \e)}
 \le \ell \,e^{c \,  \ell \delta^\a} e^{i (-\la_{-\,} + 2\e)} \quad\text{ for }i =1,\dots, n.
 $$
Using H\"older continuity and the exponent estimate \eqref{exponent} we obtain as in \eqref{decay} that
$$
  \|(\A_{x_i})^{-1}-(\A_{p_i})^{-1}\| \le M\,\dist(x_i,p_i)^\a  
    \le M \delta^\a \, e^{ -\a \g\, \min\{i,\,k-i \}}  \le M \delta^\a \, e^{ -4\e i}
 $$
 for  $i=1, \dots, n$. 
Combining these estimates we obtain for the terms in \eqref{e1-} the same estimate as in \eqref{term}
\begin{equation}\label{term-}
\|(\A^i_p)^{-1}\|  \cdot \| (\A_{x_i})^{-1}-(\A_{p_i})^{-1} \| 
  \cdot \|(\A^{n-(i+1)}_{x_{i+1}})^{-1}\| \le
 C_1(\delta)\, \|(\A^n_x)^{-1}\|\, e^{-\e i}
\end{equation}
for all $i$  with $L \le i \le n-1$.
The remainder of the argument is essentially identical to estimating $\|\A^k_p\|$
with $\A$ replaced by $\A^{-1}$.
 
 \vskip.1 cm
 
 It is clear from the argument that we can choose arbitrarily large $n$ and hence $k$.
 
 %%%%%%%%%%%%%%%%%%%%%%%%%%%%%%
 \vskip.2cm
 
\noindent {\bf Simultaneous approximation for several cocycles.}$\,$
Finally, we remark on how to obtain simultaneous approximation for cocycles
$\A^{(1)}, \dots , \A^{(m)}$ in Remark \ref{several}. First, we take the set  $\La_{\e,\ell}$
to be the intersection of the corresponding sets for all $\A^{(j)}$ and define the set 
$P=\La_{\e,\ell} \cap \r_{\e',\ell'}$  as in the proof. 
We also take $E$ to be the full measure set given by Corollary \ref{GKC} for $2m$ subadditive
cocycles 
$$
a^{(j)}(x)=\log \| \A^{(j)}\|, \quad \tilde a^{(j)}(x)=\log \| (\A^{(j)})^{-1}\|, \quad j=1, \dots, m.
$$
Then the argument goes through showing that the constructed point $p=f^kp$
 gives the approximation of the upper and lower exponents for all cocycles $\A^{(1)}, \dots , \A^{(m)}$.

%%%%%%%%%%%%%%%%%%%%%%%%%%%%

%%%%%%%%%%%%%%%%%%%%%%%%%%%%%%%%%
%%%%%%%%               bibliography                %%%%%%%%%

\vskip.5cm

%%%%%%%%%%%%%%%%%%%%%%%%%%%%%%%%%%%%
%%%%%%%%%%%%%%%%%%%%%%%%%%%%%%%%%%%%

\begin{thebibliography}{99}

\bibitem[B]{B} L. Backes.  {\em On the periodic approximation of Lyapunov 
exponents for semi-invertible cocycles}. Preprint  arXiv:1612.04159 

\bibitem[BP]{BP}  L. Barreira and Ya. Pesin.  {\em Nonuniformly hyperbolicity: dynamics of systems with nonzero Lyapunov exponents.} Encyclopedia of Mathematics and
Its Applications {\bf 115}, Cambridge University Press, 2007.
\vskip.05cm

\bibitem[GG]{G} G. Grabarnik and M. Guysinsky. {\em Liv\v{s}ic theorem for Banach rings.} 
Discrete and Continuous Dynamical Systems,  Vol. 37, Issue 8 (2017), 4379-4390.
\vskip.05cm


\bibitem[GoKa]{GK} S. Gou\"ezel and A. Karlsson. {\em Subadditive and multiplicative ergodic theorems}. Preprint. 
\vskip.05cm

\bibitem[K]{K11} B. Kalinin. {\em Liv\v{s}ic Theorem for matrix cocycles.}   Annals of Math. 173 (2011), no. 2, 1025-1042.
\vskip.05cm

\bibitem[KS1]{KS13} B. Kalinin and V. Sadovskaya. 
{\em Cocycles with one exponent over partially hyperbolic systems.}
Geometriae Dedicata,  167, Issue 1 (2013), 167-188. 
\vskip.05cm

\bibitem[KS2]{KS17} B. Kalinin and V. Sadovskaya. {\em Periodic approximation of Lyapunov exponents for Banach cocycles.} To appear  in Ergodic Theory and Dynamical Systems. 

\vskip.05cm

\bibitem[KaM]{KM} A. Karlsson and G. Margulis. 
{\em A multiplicative ergodic theorem and nonpositively curved spaces.} 
Communications in Mathematical Physics {\bf 208} (1999) 107-123.
\vskip.05cm

\bibitem[Kt]{Kt80} A. Katok. {\em Lyapunov exponents, entropy and periodic points of diffeomorphisms.} Publ. Math. IHES, 51, 137-173, 1980.
\vskip.05cm


\bibitem[P1]{P1} Ya. Pesin. {\em Families of invariant manifolds corresponding to nonzero 
characteristic exponents.} Math. USSR-Izv. 40 (1976) 1261-1305.
\vskip.05cm

\bibitem[P2]{P2} Ya. Pesin. {\em Characteristic Ljapunov exponents and smooth ergodic theory. }
Russ. Math. Surv. 32 (1977) 55-114.
\vskip.05cm

\bibitem[Po]{Po}  Mark Pollicott. {\it Lectures on Ergodic Theory and Pesin Theory on 
Compact Manifolds.} Cambridge University Press, 1993.
\vskip.05cm

\bibitem[O]{O} V. Oseledets. {\em A multiplicative ergodic theorem. Liapunov characteristic 
numbers for dynamical systems.} Trans. Mosc. Math. Soc. 19 (1968), 197-221.
  \vskip.05cm
              
\bibitem[WS]{WS}  Z. Wang and W. Sun. {\em Lyapunov exponents of hyperbolic 
              measures and hyperbolic periodic orbits}.  
              Trans. Amer. Math. Soc., 362 (2010) no. 8, 4267-4282, .
             
             
\end{thebibliography}
\end{document}